\documentclass[12pt]{amsart}
\textheight 8.5in
\textwidth 6in
\oddsidemargin 0.25in
\evensidemargin 0.25in
\topmargin 0in

\usepackage{cite}
\usepackage{amsmath}
\usepackage{amssymb,amsfonts,amscd,verbatim,hyperref,cleveref,graphics}
\usepackage{xspace,xcolor}

\newtheorem{thm}{Theorem}[section]
\newtheorem{prop}[thm]{Proposition}
\newtheorem{cor}[thm]{Corollary}
\newtheorem{lemma}[thm]{Lemma}
\numberwithin{equation}{section}

\theoremstyle{definition}
\newtheorem{rem}[thm]{Remark}
\newtheorem{ex}[thm]{Example}
\newtheorem{quest}[thm]{Question}
\newtheorem{prob}[thm]{Problem}

\def\RR{{\mathbb R}}
\def\QQ{{\mathbb Q}}

\DeclareSymbolFont{bbold}{U}{bbold}{m}{n}
\DeclareSymbolFontAlphabet{\mathbbold}{bbold}
\def\one{\mathbbold{1}}
\def\zero{\mathbbold{0}}

\newcommand{\zs}

\newcommand{\term}[1]{{\textit{\textbf{#1}}}}   


\newcommand{\taun}{\tau_{\mathrm{un}}}

\begin{document}

\title{On separability of unbounded norm topology}

\author{M.~Kandi\' c}
\address{Faculty of Mathematics and Physics,
University of Ljubljana,
Jadranska 19,
SI-1000 Ljubljana,
Slovenija}
\address{Institute of Mathematics, Physics and Mechanics,
Jadranska 19,
SI-1000 Ljubljana,
Slovenija}
\email{marko.kandic@fmf.uni-lj.si}

\author{A.~Vavpeti\v c}
\address{Faculty of Mathematics and Physics,
University of Ljubljana,
Jadranska 19,
SI-1000 Ljubljana,
Slovenija}
\address{Institute of Mathematics, Physics and Mechanics,
Jadranska 19,
SI-1000 Ljubljana,
Slovenija}
\email{ales.vavpetic@fmf.uni-lj.si}

\keywords{normed lattice, Banach function spaces, un-topology, separability, semi-finite measures}
\subjclass[2010]{Primary: 46B42; Secondary: 46A40, 46E30}

\thanks{
The first author acknowledges financial support from the Slovenian Research Agency, Grants No. P1-0222, J1-2453 and J1-2454. The second author acknowledges financial support from the Slovenian Research Agency, Grants No. P1-0292, J1-8131, N1-0064 and N1-0083.
}

\begin{abstract}
In this paper, we continue the investigation of topological properties of unbounded  norm (un-)topology in normed lattices. We characterize separability and second countability of un-topology in terms of properties of the underlying normed lattice. We apply our results to prove that an order continuous Banach function space  $X$ over a semi-finite measure space is separable if and only if it has a $\sigma$-finite carrier and  is separable with respect to the topology of local convergence in measure. We also address the question when a normed lattice is a normal space with respect to the un-topology. 
\end{abstract}

\maketitle

\section{Introduction}

Recently, the study of unbounded convergences in vector and normed lattices attracted many researchers in the area. Unbounded order convergence, although already introduced by Nakano \cite{Nakano:48} and studied by DeMarr \cite{DeMarr:64} and Kaplan \cite{Kaplan:97}, its systematic study was initiated by Gao, Troitsky and Xanthos (see  \cite{GaoX:14, Gao:14, GTX}). While unbounded norm (un-) convergence and topology were formally introduced in \cite{DOT}, the study of topological properties of un-topology began in \cite{KMT}. In \cite{KLT}, the authors extended un-topology beyond normed lattice and studied its properties. Generalization of un-topology to locally solid vector lattices and other instances of unbounded convergences were studied in e.g. \cite{DEM18, KT18, Tay18, Tay19, Zab18}. In this paper we continue the investigation of topological properties of un-topology. In particular, we are interested in separablity, second countability, and normality.  

The paper is structured as follows. In \Cref{section: prelim} we introduce notatation and basic notions needed throughout the text. In \Cref{splosno separabilnost} we consider the question when a given normed lattice $X$ equipped with its unbounded norm topology  $\taun$ is a separable or even a second countable space. We first prove that $X$ contains a countable quasi-interior set $\{u_n:\; n\in \mathbb N\}$  whenever $(X,\taun)$ is separable. This result enables us to prove (see  \Cref{sep norm un}) that separability and/or second countability of $(X,\taun)$ is equivalent to separability of the normed lattice $X$ itself. This result is applied to prove that separability of the unbounded norm topology passes up and down between $X$ and its Banach space completion $\widehat X$. 

In \Cref{Section components separability} we make one step further. Instead of considering separability of $X$ or equivalently separability of principal ideals $I_{u_n}$ for each $n \in \mathbb N$, we
rather consider separability of the sets $C_{u_n}$ of components of vectors $u_n$ equipped with the metric topology inherited from $X$. For a normed lattice with the projection property we prove in \Cref{separability in terms of components} that it is separable if and only if the set of components $C_{u_n}$ is separable for each $n\in\mathbb N$. We also provide an example that this statement does not hold in general for normed lattices without the principal projection property. 

In \Cref{section: BFS} we consider separability of order continuous Banach function spaces over semi-finite measure spaces. In \Cref{separabilityBFSEXT} we prove that such a Banach function space is separable if and only if it has a $\sigma$-finite carrier and it is separable with respect to the topology of local convergence in measure. 

In \Cref{Section: Metric space components} we are interested in those vector lattices for which the equality $I_u=I_v$ between principal ideals  or the equality $B_u=B_v$
between principal bands yields an existence of a bijective mapping between $C_u$ and $C_v$. In \Cref{arch unif compl components}  we prove that the equality $I_u=I_v$ for positive vectors in a uniformly complete Archimedean vector lattice  guarantees an existence of a surjective isometry between $C_u$ and $C_v$. If we replace the condition $I_u=I_v$ by the weaker condition $B_u=B_v$, then \Cref{discontinuity of varphi} shows that it is even possible that there is no homeomorphism between $C_u$ and $C_v$ in the case of Dedekind complete vector lattices. Nevertheless,  for order continuous Banach lattices the condition $B_u=B_v$ is sufficient for $B_u$ and $B_v$ to be homeomorphic (see \Cref{homeomorphism between space of components}). 

In  \Cref{Section: normal un} we discuss the separation axioms of a normed lattice equipped with the unbounded norm topology. Since $(X,\taun)$ satisfies the Hausdorff axiom, by the general theory of topological vector spaces the space $(X,\taun)$ is completely regular. We prove that $(X,\taun)$ is a paracompact and hence a normal space whenever $X$ is an atomic KB-space.

\section{Preliminaries}\label{section: prelim}

Throughout the paper we will assume that all vector lattices are Archimedean.  Let $X$ be a vector lattice. The set of all positive elements is denoted by $X^+$. For a subset $A$ we denote by $I_A$ and $B_A$ the ideal and the band in $X$ generated by $A$, respectively. In the case when the set $A=\{u\}$ is a singleton set  we write $I_A=I_u$ and $B_A=B_u$.
Ideals and bands generated by a single element are called \term{principal}. 
For a given set $A\subseteq X$ we denote the set $\{x\in X:\; |x|\wedge |a|=0 \textrm{ for all }a\in A\}$ by $A^d$ and we call it the \term{disjoint complement} of $A$ in $X$. It turns out that $A^d$ is always a band in $X$.
If a band $B$ satisfies $X=B\oplus B^d$, then $B$ is called a \term{projection band}. To a projection band $B$ we associate the \term{band projection} $P_B$ which is the identity mapping on $B$ and the zero mapping on $B^d$. It is well known that band projections always commute. 
If every principal band in $X$ is a projection band, then $X$ is said to have the \term{principal projection property}.  A vector lattice $X$ is said to be \term{Dedekind complete} or \term{order complete} if non-empty bounded from above subsets of $X$ have suprema. If non-empty countable subsets of $X$ have suprema, then $X$ is said to be \term{$\sigma$-Dedekind complete} or \term{$\sigma$-order complete}. A vector lattice is said to satisfy the \term{countable sup property} whenever every non-empty subset possessing a supremum contains an at most countable subset possessing the same supremum. The countable sup property is equivalent to the following fact:  for each net $(x_\alpha)$ in $X$ that satisfies $0\leq x_\alpha\uparrow x$ there is an increasing sequence of indices $(\alpha_n)_{n\in\mathbb N}$ such that $0\leq x_{\alpha_n}\uparrow x$ (see e.g. \cite[Theorem 23.2(iii)]{Luxemburg:71}). A positive vector $a$ in $X$ is said to be an \term{atom} whenever the principal ideal $I_a$ is one-dimensional. Since $X$ is Archimedean, the principal ideal $I_a$ is a projection band in $X$. A vector lattice $X$ is called \term{atomic} if the linear span of the set of all atoms is \term{order dense} in $X$, i.e., for each non-zero positive vector $x\in X$ there exists an atom  $a$ in $X$ such that $0\leq a\leq x$.

A \term{lattice norm}  $\|\cdot\|$ on a vector lattice  is a norm  which satisfies $\|x\|\leq \|y\|$ for all elements $x$ and $y\in X$ with  $0\leq |x|\leq |y|$. A vector lattice equipped with a lattice norm is said to be a \term{normed lattice}. A \term{Banach lattice} is a normed lattice which is also a Banach space. The Banach space completion of a normed lattice $X$ which is denoted by $\widehat X$ is always a Banach lattice. 
A normed lattice is \term{order continuous} whenever $x_\alpha\downarrow 0$ implies $x_\alpha \to 0$ in norm.  A normed lattice $X$ has the \term{$\sigma$-Fatou property} if for every increasing sequence $(x_n)_{n\in\mathbb N}$ in $X^+$ with the supremum $x$ it follows that $$\|x\|=\sup_{n\in\mathbb N}\|x_n\|.$$ It turns out that dual Banach lattices and Banach lattice with order continuous norm have the $\sigma$-Fatou property (see e.g. \cite{MN91}). 

A set $\mathcal Q$ of positive vectors of a normed lattice $X$ is said to be a \term{quasi-interior set} if the ideal generated by $\mathcal Q$ in $X$ is dense in $X$. A positive vector $u\in X$ is said to be a \term{quasi-interior point} whenever the set $\{u\}$ is a quasi-interior set of $X$. A positive vector $u\in X$ is called a \term{weak unit} if $|x|\wedge u=0$ implies $x=0$. Since norm dense ideals are order dense, every quasi-interior point is a weak unit. Although the converse statement does not hold in general, in order continuous normed lattices  these two notions coincide. The remaining unexplained facts about vector and normed lattices can be found in \cite{Abramovich:02, Aliprantis:06, Luxemburg:71, MN91, SCH71, Zaanen:96}.

Given a normed lattice $X$, a positive number $\epsilon>0$ and a positive vector $x\in X^+$ we define the set  
$$U_{x,\epsilon}=\{y: \, \| |y|\wedge x\|<\epsilon\}.$$ 
Then  the collection of all sets of this form is a base of neighborhoods for zero for some locally solid Hausdorff linear topology on $X$. This topology is called the \term{unbounded norm topology} (un-topology for short) on $X$ induced by the norm, and it is denoted by $\taun$.  A net $(x_\alpha)$ in $X$ is said to \term{un-converge} to a vector $x\in X$ whenever for each $y\in X^+$ we have $|x_\alpha-x|\wedge y\to 0$ in norm. It should be clear that a net $(x_\alpha)$ un-converges to $x$ if and only if it converges to $x$ with respect to $\taun$.

\section{Separability of un-topology for general normed lattices}\label{splosno separabilnost}

In this section we consider the question under which conditions a normed lattice equipped with the unbounded norm topology $\taun$ is a separable or even a second countable topological space. Although in general the unbounded norm topology behaves differently than the norm topology, the answer is surprisingly simple. In fact, we prove in \Cref{sep norm un} that separability or equivalently second countability of $(X,\taun)$ is equivalent to separability of $X$. We start with the following proposition which at first glance might appear to be an improvement if \cite[Exercise 4.2.3]{Abramovich:02} however having in mind \Cref{sep norm un}, it just provides an invaluable tool for proving it.

\begin{prop}\label{splosni}
Let $X$ be a normed lattice such that $(X,\taun)$ is separable.
\begin{enumerate}
\item Then $X$ has a countable quasi-interior set.
\item If $X$ is a Banach lattice, then $X$ has a quasi-interior point.
\end{enumerate}
\end{prop}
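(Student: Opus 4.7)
The strategy for part (1) is to take any countable $\taun$-dense subset $D=\{x_n:n\in\NN\}$ of $X$ and show that the collection of absolute values $\mathcal{Q}=\{|x_n|:n\in\NN\}$ already serves as a quasi-interior set. Given $y\in X^+$ and $\epsilon>0$, the key idea is to test un-density using the specific base neighborhood $y+U_{y,\epsilon}$: some $x_n\in D$ must lie in it, which gives $\||y-x_n|\wedge y\|<\epsilon$. Passing from $x_n$ to $|x_n|$ via $||y|-|x_n||\leq|y-x_n|$ preserves this estimate, so $\||y-|x_n||\wedge y\|<\epsilon$. Finally I would invoke the lattice identities $y-y\wedge v=(y-v)^+$ and $(y-v)^+\leq|y-v|\wedge y$, valid for $y,v\in X^+$, to obtain $\|y-y\wedge|x_n|\|<\epsilon$. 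Since $0\leq y\wedge|x_n|\leq|x_n|$, the approximant lies in the ideal generated by $\mathcal{Q}$, so every $y\in X^+$ is norm-approximable by this ideal; decomposing $y=y^+-y^-$ extends this to norm-density on all of $X$.

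For part (2), given a countable quasi-interior set $\{u_n\}$ furnished by (1), I would use completeness of $X$ to collapse the set to a single quasi-interior point via the classical summation trick: set
\[
u=\sum_{n=1}^{\infty}\frac{u_n}{2^n(1+\|u_n\|)},
\]
which converges absolutely and hence in norm to a positive vector $u\in X$. From $0\leq u_n\leq 2^n(1+\|u_n\|)u$ it follows that each $u_n\in I_u$, so $I_u$ contains the (already norm-dense) ideal generated by $\{u_n\}$ and is therefore itself norm-dense, making $u$ a quasi-interior point of $X$.

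The principal obstacle sits in part (1): un-density of $D$ says nothing direct about norm density, and in general a $\taun$-approximant of $y$ need not be norm-close to $y$. The resolution is to truncate by $y$ itself: testing against $w=y$ in the un-neighborhood converts un-closeness to $y$ into norm-closeness to $y$ through the operation $y\wedge|x_n|$, which has the double virtue of staying inside the principal ideal $I_{|x_n|}$ and of making the positive part $(y-|x_n|)^+$ appear naturally in the estimate. Once this lattice manipulation is in place, part (2) presents no further conceptual difficulty beyond the standard Banach lattice summation argument.
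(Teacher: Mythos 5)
Your proof is correct. Part (2) is essentially the paper's own argument (the absolutely convergent weighted sum $u=\sum_n u_n/(2^n(1+\|u_n\|))$ and the observation $u_n\le 2^n(1+\|u_n\|)u$). Part (1), however, takes a genuinely different and more economical route. The paper passes to the positive parts $x_n^+$, builds the countable \emph{additive, upward-directed} family $\mathcal G$ of finite sums, and invokes Drnov\v sek's criterion, which forces it to prove that the whole net $(x-x\wedge u)_{u\in\mathcal G}$ converges to zero; controlling the tail of that net is exactly why the paper tests the un-neighbourhood against the auxiliary vector $|x-x_{F_1}|$ and has to combine two approximants $x_{F_1},x_{F_2}$ via directedness. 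You instead note that the paper's definition of a quasi-interior set only asks for norm density of the generated ideal, so a single good approximant for each $\epsilon$ suffices, and testing against $y$ itself produces it: from $x_n\in y+U_{y,\epsilon}$ and the chain $(y-|x_n|)^+\le |y-|x_n||\wedge y\le |y-x_n|\wedge y$ you get $\|y-y\wedge|x_n|\|<\epsilon$ with $y\wedge|x_n|\in I_{|x_n|}$; the auxiliary inequalities are valid ($(y-v)^+\le y$ because $y\wedge v\ge 0$ for $y,v\ge 0$, and $(y-v)^+\le|y-v|$ always), and $y=y^+-y^-$ finishes the job. What the paper's heavier construction buys is the stronger approximate-unit statement that $x\wedge u\to x$ in norm along one fixed directed set $\mathcal G$ (the classical formulation via the cited proposition); your argument delivers the same conclusion for the definition actually in use, is shorter, and is self-contained.
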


\begin{proof}
(1) Let $\{x_n:\; n\in \mathbb N\}$ be a countable $\taun$-dense subset of $X$.
Since the inequality
$$0\leq |x^+-y^+|\wedge |z|\leq |x-y|\wedge |z|$$
holds for all $x,y,z \in X$, the set $\{x_n^+:\, n\in\mathbb N\}$ is $\taun$-dense in
$X^+$. Let $\mathcal F$ be the set of all finite subsets of $\mathbb N$. For $F\in \mathcal F$ we define
$x_F=\sum_{n\in F}x_n^+$ and we observe that the family $\mathcal D:=\{x_F:\; F\in \mathcal F\}$  is countable and upward directed.
The set  $\mathcal G$ of all finite sums of elements from $\mathcal D$ is countable, additive, upward directed and it obviously contains $\mathcal D$. Since $\mathcal G$ is upward directed, it can be considered as an increasing net.

Since $\mathcal G$ is additive, in order to prove that $\mathcal G$ is a quasi-interior set,  by \cite[Proposition 2.2]{Drnovsek01} it is enough to show that for every positive vector $x$  the net $(x-x\wedge u)_{u\in\mathcal G}$ converges to zero.
Pick any positive vector $x\in X^+$, any set $F_1\in \mathcal F$ and an arbitrary $\epsilon>0$. Then $x+U_{|x-x_{F_1}|,\epsilon}$ is a $\taun$-neighborhood of $x$, and since $\mathcal D$ is $\taun$-dense in $X^+$, there exists $F_2\in \mathcal F$ such that $x_{F_2}\in x+U_{|x-x_{F_1}|,\epsilon}$ from where it follows that
$$\big{\|}|x-x_{F_2}| \wedge |x-x_{F_1}|\big{\|}<\epsilon.$$
Let us define $F_0=F_1\cup F_2$. Then for any $u=\sum_{i=1}^n x_{G_i}\in \mathcal G$ with $F_0\subseteq G:=\bigcup_{i=1}^n G_i$ we have that
$x-u \leq x-x_{F_1}$ and $x-u\leq x-x_{F_2}$ from where it follows that
$(x-u)^+\leq (x-x_{F_1})^+$ and $(x-u)^+\leq (x-x_{F_2})^+$. Since each vector $a\in X$ satisfies $0\leq a^+\leq |a|$, we conclude that
\begin{align*}
\|(x-u)^+\|&=\|(x-u)^+ \wedge (x-u)^+\|\leq \|(x-x_{F_1})^+\wedge (x-x_{F_2})^+\|\\
&\leq \big{\|}|x-x_{F_1}| \wedge |x-x_{F_2}|\big{\|}<\epsilon.
\end{align*}
From this it follows that the net $((x-u)^+)_{u\in \mathcal G}$ is norm convergent to zero.
Due to the equality
$(x-u)^+=x-x\wedge u$, we conclude that for each $x\in X^+$ the net
$(x\wedge u)_{u\in \mathcal G}$ converges in norm to $x$. 

(2) By (1), there exists  a countable  quasi-interior set  $\mathcal D=\{u_n:\; n\in \mathbb N\}$ in $X$.
Since the series of positive vectors
$$\sum_{n=1}^\infty \frac{u_n}{2^n\,(\|u_n\|+1)}$$ converges absolutely, it converges to some positive vector $u\in X$.  Since for each $n\in \mathbb N$ we have $0\leq u_n\leq 2^n(\|u_n\|+1) u$, the principal ideal $I_u$ contains the quasi-interior set $\mathcal D$ from where it follows that $I_u$ is dense in $X$.
\end{proof}

The following theorem which follows from \Cref{splosni} is the main result of this section. 

\begin{thm}\label{sep norm un}
The following assertions about a normed lattice $X$ are equivalent.
\begin{enumerate}
\item $X$ is second countable.
\item $X$ is norm separable.
\item $(X,\taun)$ is separable.
\item $(X,\taun)$ is second countable.
\end{enumerate}
\end{thm}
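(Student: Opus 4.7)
The plan is to dispatch $(1) \Leftrightarrow (2)$, $(2) \Rightarrow (3)$, and $(4) \Rightarrow (3)$ as immediate facts, and then prove the two substantive implications $(3) \Rightarrow (2)$ and $(2) \Rightarrow (4)$. The easy observations: $(1) \Leftrightarrow (2)$ is the equivalence of separability and second countability for the metric norm topology; $(2) \Rightarrow (3)$ holds because the norm topology is finer than $\taun$, so any norm-dense countable set is already $\taun$-dense; and $(4) \Rightarrow (3)$ is standard.

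The main implication is $(3) \Rightarrow (2)$. By \Cref{splosni}(1), $\taun$-separability yields a countable quasi-interior set, which on passing to partial sums of an enumeration I may assume is an increasing sequence $\{v_n\}_{n \in \mathbb N}$; the argument inside the proof of \Cref{splosni} moreover gives that $y \wedge v_n \to y$ in norm for each $y \in X^+$, hence that the truncation $T_n(y) := (y \vee (-v_n)) \wedge v_n$ converges to $y$ in norm for every $y \in X$. Let $\{x_k\}_{k \in \mathbb N}$ be a countable $\taun$-dense set. Standard lattice manipulations yield that each $T_n$ is $1$-Lipschitz with image in $[-v_n, v_n]$, so that $|T_n(y) - T_n(z)| \leq |y - z| \wedge 2v_n$ for all $y, z \in X$. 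Given $y \in X$ and $\epsilon > 0$, I pick $n$ with $\|T_n(y) - y\| < \epsilon/2$; since $y + U_{2v_n, \epsilon/2}$ is a $\taun$-neighborhood of $y$, I then choose $k$ with $\||y - x_k| \wedge 2v_n\| < \epsilon/2$ and conclude $\|T_n(x_k) - y\| < \epsilon$ by the triangle inequality. Hence $\{T_n(x_k) : n, k \in \mathbb N\}$ is a countable norm-dense subset of $X$.

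For $(2) \Rightarrow (4)$ I will combine separability (obtained from $(2) \Rightarrow (3)$) with metrizability of $(X, \taun)$. From a countable norm-dense sequence I extract a countable quasi-interior set (e.g.\ by taking absolute values) and pass to an increasing sequence $\{v_n\}$ as before. I claim that $\{U_{v_n, 1/k} : n, k \in \mathbb N\}$ is a $\taun$-neighborhood base at zero: given $v \in X^+$ and $\epsilon > 0$, I approximate $v$ in norm within $\epsilon/2$ by a positive element $w$ of the ideal generated by $\{v_n\}$ with $w \leq cv_N$ for some integers $c, N$; the estimates $\bigl||y| \wedge v - |y| \wedge w\bigr| \leq |v - w|$ and $|y| \wedge cv_N \leq c(|y| \wedge v_N)$ then give $U_{v_N, 1/k} \subseteq U_{v, \epsilon}$ for $k$ sufficiently large. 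A separable, metrizable Hausdorff space is second countable, completing the argument.

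The creative step is the truncation trick in $(3) \Rightarrow (2)$, which lets $\taun$-approximation of $y$ be pulled back to norm-approximation through the Lipschitz bound against $|y - z| \wedge 2v_n$. The main technical obstacle lies in the verification of the countable neighborhood base in $(2) \Rightarrow (4)$: one must juggle the approximation of an arbitrary $v$ by an ideal element together with the scaling inequality $|y| \wedge cv_N \leq c(|y| \wedge v_N)$, which is valid for integers $c \geq 1$ via the identity $c(a \wedge b) = (ca) \wedge (cb) \geq a \wedge cb$.
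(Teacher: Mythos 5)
Your proof is correct in substance but routes the two nontrivial implications differently from the paper, and in a way that is arguably more self-contained. The paper closes the cycle as (3)$\Rightarrow$(4)$\Rightarrow$(2): it gets (3)$\Rightarrow$(4) by citing the metrizability criterion of Kandi\'c--Taylor (countable quasi-interior set implies $\taun$ metrizable) and then does the real work in (4)$\Rightarrow$(2), showing that the un-topology on the dense ideal $I_{\mathcal G}$ coincides with the relative topology, that the order intervals $[-nu_n,nu_n]$ are therefore norm separable (since un and norm agree on order intervals), and that their union is norm dense. You instead prove (3)$\Rightarrow$(2) directly via the truncations $T_n(y)=(y\vee(-v_n))\wedge v_n$: the two bounds $|T_n(y)-T_n(z)|\leq|y-z|$ and $|T_n(y)-T_n(z)|\leq 2v_n$ combine to $|T_n(y)-T_n(z)|\leq|y-z|\wedge 2v_n$, which converts $\taun$-density of $\{x_k\}$ into norm density of $\{T_n(x_k)\}$. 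This is a clean alternative that avoids both the relative-topology lemma and the order-interval argument. For (2)$\Rightarrow$(4) you re-derive the countable neighborhood base $\{U_{v_N,1/k}\}$ by hand rather than quoting \cite[Theorem 4.3]{KT18}; your verification (approximate $v$ by $w\leq cv_N$ in the generated ideal, then use $|y|\wedge cv_N\leq c(|y|\wedge v_N)$) is exactly the standard proof of that theorem, so nothing is lost. Both arguments lean on \Cref{splosni} in the same way.

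One concrete inaccuracy: you assert that if $\{u_n\}$ enumerates the countable quasi-interior set from \Cref{splosni} and $v_n=u_1+\cdots+u_n$, then ``the argument inside the proof of \Cref{splosni}'' gives $y\wedge v_n\to y$ for all $y\in X^+$. That proof gives convergence of the net $(y\wedge u)_{u\in\mathcal G}$ over the \emph{additive} directed set $\mathcal G$ of all finite sums, and the plain partial sums are not cofinal in $\mathcal G$ (e.g.\ $2u_1$ need not lie below any $v_n$); indeed, in $L^1[0,1]$ with $u_n=\chi_{[1/(n+1),1/n]}$ one has $v_n=\chi_{[1/(n+1),1]}$ and $y\wedge v_n\not\to y$ for $y\equiv 2$. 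The fix is immediate: take $v_n=n(u_1+\cdots+u_n)$, which is an increasing cofinal sequence in $\mathcal G$, so that $(y-v_n)^+$ decreases and is eventually dominated by $(y-u)^+$ for any fixed $u\in\mathcal G$, whence $\|(y-v_n)^+\|\to 0$. (Equivalently, argue directly from norm density of the generated ideal.) With this adjustment every step of your argument goes through, including $T_n(y)\to y$ for general $y$ via $y-y\wedge v_n=(y-v_n)^+\leq(y^+-v_n)^+$ and the analogous estimate for $y\vee(-v_n)$.
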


\begin {proof}
The equivalence (1)$\Leftrightarrow$(2) holds for general metric spaces.

(2)$\Rightarrow$(3) If $X$ is norm separable, then $X$ is separable with respect to any topology weaker than the norm topology. In particular, $(X,\taun)$ is separable.

(3)$\Rightarrow$(4) Suppose that $(X,\taun)$ is separable. By \Cref{splosni} we conclude that  $X$ has a countable quasi-interior set, so that  $(X,\taun)$ is metrizable by \cite[Theorem 4.3]{KT18}. Since $(X,\taun)$ is metrizable and separable, it is second countable.

(4)$\Rightarrow$(2) Since $(X,\taun)$ is second countable, it is separable, so that by \Cref{splosni} we conclude that $X$ has a countable quasi-interior set $\mathcal G$. By enumerating the vectors in $\mathcal G$ as $\{u_1,u_2,\ldots\}$ and after replacing the $n$-th vector by the sum $u_1+\cdots+u_n$, we may assume without loss of generality that $u_1\leq u_2\leq \ldots$

Let us consider the order ideal $I_{\mathcal G}$ generated by $\mathcal G$. Since $\mathcal G$ is a quasi-interior set,  the ideal $I_{\mathcal G}$ is norm dense in $X$. 
We claim that the un-topology on $I_{\mathcal G}$ equals the relative topology induced by the un-topology from $X$. To see this, note first that the equality $U_{x,\epsilon}^{I_{\mathcal G}}:=\{y\in I_{\mathcal G} : \| |y|\wedge x\|<\epsilon \}=U_{x,\epsilon}\cap I_{\mathcal G}$ for all $x\in I_{\mathcal G}$ yields that every basis neighborhood $U_{x,\epsilon}^{I_{\mathcal G}}$ of zero for the un-topology on $I_{\mathcal G}$ is open in the relative topology. To prove the converse, pick any $x\in X$ and $\epsilon>0$. Then $U_{x,\epsilon}\cap I_{\mathcal G}$ is a basis open neighborhood for zero in the relative topology on $I_{\mathcal G}$ induced by the un-topology of $X$. Since $I_{\mathcal G}$ is norm dense in $X$, there exists a positive vector $y\in I_{\mathcal G}$ such that $\|x-y\|<\frac{\epsilon}{2}$. Pick any $z\in U_{y,\frac{\epsilon}{2}}^{I_{\mathcal G}}$. From the inequality
$$\big{\|}|z|\wedge x\big{\|}\leq \big{\|}|z|\wedge |x-y|\big{\|}+\big{\|}|z|\wedge y\big{\|}<\epsilon$$ we conclude that
$U_{y,\frac{\epsilon}{2}}^{I_{\mathcal G}}\subseteq U_{x,\epsilon}\cap I_{\mathcal G}$ which proves the claim.

Since $(X,\taun)$ is second countable, the un-topology on $I_{\mathcal G}$ is second countable as well. This yields that for each $n\in \mathbb N$ the interval $[-nu_n,nu_n]$ is second countable with respect to the relative un-topology induced from $I_{\mathcal G}$. Since on order intervals the un-topology agrees with the norm topology, each order interval $[-nu_n,nu_n]$ is norm separable. If $\mathcal F_n$ is a countable norm dense subset of $[-nu_n,nu_n]$, then the set $\mathcal F:=\bigcup_{n=1}^\infty \mathcal F_n$ is a countable norm dense subset of $I_{\mathcal G}$, so that norm density of $I_{\mathcal G}$ in $X$ yields norm density of $\mathcal F$ in $X$. This proves that $X$ is separable.
\end{proof}

Although in metric spaces separability is equivalent to second countability, this is not the case even for locally convex Hausdorff spaces. For example, the space $\mathbb R^{\mathbb R}$ equipped with the product topology is separable by \cite[Theorem 1]{RossStone}, yet the space is not second countable since it is an uncountable topological product. 

The following result tells that second countability and separability of the unbounded norm topology pass up and down between a normed lattice and its norm completion.

\begin{cor}
For a normed lattice $X$ the following assertions are equivalent. 
\begin{enumerate}
\item $(\widehat X,\taun)$ is second countable.
\item $(X,\taun)$ is second countable.
\item $(\widehat X,\taun)$ is separable.
\item $(X,\taun)$ is separable.
\end{enumerate}
\end{cor}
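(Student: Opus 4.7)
The plan is to reduce everything to the norm-separability version via \Cref{sep norm un} and then apply the standard fact that norm separability passes between a normed space and its Banach completion.

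First, I would apply \Cref{sep norm un} twice: once to $X$ itself, which gives that (2), (4), and norm separability of $X$ are mutually equivalent; and once to the Banach lattice $\widehat{X}$, which gives that (1), (3), and norm separability of $\widehat{X}$ are mutually equivalent. Note that $\widehat{X}$ is indeed a normed lattice (in fact a Banach lattice), so \Cref{sep norm un} applies to it directly. After this reduction, the corollary amounts to the single equivalence
\[
X \text{ is norm separable} \iff \widehat{X} \text{ is norm separable.}
\]

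For the forward direction, any countable norm-dense subset $D \subseteq X$ is automatically norm dense in $\widehat{X}$, because $X$ is norm dense in $\widehat{X}$ and density is transitive. For the reverse direction, suppose $\{y_n : n \in \mathbb N\}$ is norm dense in $\widehat{X}$. Using that $X$ is norm dense in $\widehat{X}$, for each pair $(n,k) \in \mathbb N \times \mathbb N$ pick $x_{n,k} \in X$ with $\norm{x_{n,k} - y_n} < 1/k$. The set $\{x_{n,k} : n, k \in \mathbb N\} \subseteq X$ is countable, and a routine triangle-inequality argument shows it is norm dense in $\widehat{X}$, hence in $X$ as a subset.

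There is no serious obstacle here; the content of the corollary is entirely in \Cref{sep norm un}, and what remains is the textbook fact that a metric space and its completion share the separability property. The only thing to check is that \Cref{sep norm un} may legitimately be invoked for $\widehat{X}$, which is immediate from the fact that the norm completion of a normed lattice is a Banach lattice (as recalled in \Cref{section: prelim}).
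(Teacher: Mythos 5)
Your proposal is correct and follows essentially the same route as the paper: both reduce (1)$\Leftrightarrow$(3) and (2)$\Leftrightarrow$(4) to \Cref{sep norm un} and then transfer norm separability between $X$ and $\widehat X$ using density of $X$ in its completion. Your explicit approximation argument for the reverse direction is a fine (if slightly more detailed) substitute for the standard fact that subspaces of separable metric spaces are separable.
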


\begin{proof}
Equivalences (1)$\Leftrightarrow$(3) and (2)$\Leftrightarrow$(4) follow directly from \Cref{sep norm un}.

(1)$\Rightarrow$(2) By \Cref{sep norm un}, the Banach lattice $\widehat X$ is separable, so that $X$ is also separable. Another application of \Cref{sep norm un} yields that $(X,\taun)$ is separable. The converse implication can be proved similarly.
\end{proof}

The following corollary follows from \Cref{sep norm un} and from the fact that $X$ is separable whenever its norm dual $X^*$ is.

\begin{cor}
Let $X$ be a normed lattice. If $(X^*,\taun)$ is separable, then $(X,\taun)$ is separable.
\end{cor}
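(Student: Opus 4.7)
The plan is to chain together \Cref{sep norm un} with the classical Banach space fact that norm separability of the continuous dual implies norm separability of the space itself. Concretely, I would argue as follows.

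First, I would apply \Cref{sep norm un} to the normed lattice $X^*$: the hypothesis that $(X^*,\taun)$ is separable is, by the equivalence (2)$\Leftrightarrow$(3) in \Cref{sep norm un}, the same as saying that $X^*$ is norm separable. Here I am using that the dual of a normed lattice is itself a (Banach) lattice, so \Cref{sep norm un} genuinely applies to $X^*$.

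Next, I would invoke the standard result from Banach space theory: if the norm dual $X^*$ of a normed space $X$ is norm separable, then $X$ is norm separable (this is proved in the usual way by choosing a countable norming sequence of functionals and witnessing elements in the unit sphere, then taking rational linear combinations of those witnesses). So $X$ is norm separable.

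Finally, I would apply \Cref{sep norm un} once more, this time to $X$: norm separability of $X$ is equivalent to separability of $(X,\taun)$, which is the desired conclusion. The whole argument is essentially a two-step reduction, with the only nontrivial ingredient being the classical duality fact, and no obstacle beyond bookkeeping the three invocations of \Cref{sep norm un}.
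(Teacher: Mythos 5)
Your proof is correct and follows exactly the route the paper intends: the paper derives this corollary precisely from \Cref{sep norm un} (applied to both $X^*$ and $X$) together with the classical fact that norm separability of $X^*$ implies norm separability of $X$. No differences worth noting.
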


We conclude this section with a characterization of separability of atomic normed lattices with order continuous norm.

\begin{cor}\label{atomic un}
For an atomic normed lattice $X$ with order continuous norm the following statements are equivalent.
\begin{enumerate}
  \item $X$ is separable.
  \item $(X,\taun)$ is second countable.
  \item $(X,\taun)$ is separable.
  \item $(X,\taun)$ is first countable.
  \item $(X,\taun)$ is metrizable.
  \item $X$ has an at most countable quasi-interior set.
  \item Every disjoint set in $X$ is at most countable.
  \item Every disjoint set of atoms in $X$ is at most countable.
\end{enumerate}
\end{cor}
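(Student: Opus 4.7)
The plan is to reduce most of the equivalences to the results already established in this section and then to use the atomic, order continuous structure of $X$ to close the loop. The equivalences $(1)\Leftrightarrow(2)\Leftrightarrow(3)$ are precisely the content of \Cref{sep norm un}. The implication $(3)\Rightarrow(6)$ is given by \Cref{splosni}(1), and $(6)\Rightarrow(5)$ is \cite[Theorem 4.3]{KT18}, already invoked in the proof of \Cref{sep norm un}. The implication $(5)\Rightarrow(4)$ is trivial, and for $(4)\Rightarrow(5)$ I would invoke the Birkhoff--Kakutani theorem applied to the Hausdorff topological group $(X,\taun)$: a Hausdorff topological group which is first countable is metrizable. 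This already establishes the equivalence of $(4)$, $(5)$, and $(6)$.

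To bring $(7)$ and $(8)$ into the chain, I first observe that $(1)\Rightarrow(7)$ holds in any normed lattice: given an uncountable disjoint family of nonzero vectors $\{x_\alpha\}$ in $X$, rescale each $x_\alpha$ to have norm one; the identity $|x_\alpha - x_\beta| = |x_\alpha| + |x_\beta|$ for disjoint vectors, combined with the lattice norm property, gives $\|x_\alpha-x_\beta\|\geq 1$ for all $\alpha\neq\beta$, which is incompatible with norm separability. The implication $(7)\Rightarrow(8)$ is immediate.

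The main content is the implication $(8)\Rightarrow(1)$, which simultaneously delivers $(8)\Rightarrow(6)$. I would fix a maximal disjoint family of atoms $\{e_n : n\in\mathbb N\}$, which is at most countable by $(8)$. Atomicity upgrades this to a maximal disjoint set in $X^+$: a nonzero positive vector disjoint from every $e_n$ would dominate some atom disjoint from all $e_n$, contradicting maximality. Since every atom in an Archimedean vector lattice spans a projection band, the finite sums $P_F := \sum_{n\in F}P_{e_n}$ with $F\subset\mathbb N$ finite are well-defined band projections. For fixed $x\in X^+$ the net $(P_F(x))_F$ is increasing and bounded above by $x$; the maximality argument applied to $x - \sup_F P_F(x)$ (which is positive and disjoint from every $e_n$) forces its order supremum to equal $x$. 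Order continuity of the norm then upgrades this to $P_F(x)\to x$ in norm, so $X$ is the norm closure of the linear span of the countable set $\{e_n : n\in\mathbb N\}$, and is therefore separable.

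The step I expect to require the most care is $(8)\Rightarrow(1)$: one must simultaneously exploit atomicity (to rule out nonzero vectors disjoint from every $e_n$), order continuity (to convert the order convergence $P_F(x)\uparrow x$ into norm convergence), and the existence of the band projections onto atoms in the Archimedean setting. All remaining steps are either direct applications of \Cref{sep norm un}, \Cref{splosni}, and \cite[Theorem 4.3]{KT18}, or short general-topology observations.
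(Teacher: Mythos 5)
Your reductions $(1)\Leftrightarrow(2)\Leftrightarrow(3)$, $(3)\Rightarrow(6)$, $(6)\Leftrightarrow(5)\Leftrightarrow(4)$, and the cycle $(1)\Rightarrow(7)\Rightarrow(8)\Rightarrow(1)$ are all sound; indeed your $(1)\Rightarrow(7)$ (normalize an uncountable disjoint family and use $\|x_\alpha-x_\beta\|\geq 1$) is simpler than anything in the paper, and your $(8)\Rightarrow(1)$ matches the paper's argument. But the overall logical structure has a genuine gap: as assembled, you prove that $(1),(2),(3),(7),(8)$ are mutually equivalent, that they imply $(6)$, and that $(4),(5),(6)$ are mutually equivalent --- and you never provide an implication from the class $\{(4),(5),(6)\}$ back into the class $\{(1),(2),(3),(7),(8)\}$. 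Replacing the paper's arrow $(6)\Rightarrow(7)$ by the easier $(1)\Rightarrow(7)$ is exactly what breaks the cycle. The missing direction is not trivial: a countable quasi-interior set does not imply separability in a general Banach lattice (e.g.\ $\ell^\infty$ has a strong unit), so order continuity and atomicity must actually be used somewhere to get from $(6)$ back to $(1)$.

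The paper closes this gap by proving $(6)\Rightarrow(7)$ directly: given a countable quasi-interior set $\{u_n\}$ and a disjoint family $\{x_\lambda\}$ of positive vectors, each set $\Lambda_{n,m}=\{\lambda:\ \|u_n\wedge x_\lambda\|\geq \tfrac1m\}$ must be finite, because an infinite subfamily would yield an order-bounded disjoint sequence not converging to zero in norm, contradicting order continuity of the completion $\widehat X$ (via \cite[Theorem 4.14]{Aliprantis:06} and \cite[Theorem 64.3]{LuxNotes2}); any $\lambda$ outside $\bigcup_{n,m}\Lambda_{n,m}$ gives $x_\lambda\wedge u_n=0$ for all $n$ and hence $x_\lambda=0$. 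You need this (or some other arrow out of $(6)$) for the corollary to hold as stated. Two small further remarks: your appeal to $\sup_F P_F(x)$ in $(8)\Rightarrow(1)$ presumes the supremum exists, which a normed lattice need not guarantee a priori --- argue instead that any upper bound $z\leq x$ of the net $(P_F(x))_F$ has $x-z$ disjoint from every atom, hence $x-z=0$; and the paper also treats the finite-dimensional case separately, though your version of $(8)\Rightarrow(1)$ does not actually require that split.
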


\begin{proof}
If $\dim X<\infty$, then all Hausdorff linear topologies on $X$ coincide so that, in this case, all statements between (1) and (5) are equivalent. Furthermore, $X$ has a strong unit and every disjoint set of atoms in $X$ has cardinality at most $\dim X<\infty$.

Assume now that $\dim X=\infty$.
Then the equivalences (1)$\Leftrightarrow$(2)$\Leftrightarrow$(3) follow from \Cref{sep norm un} while (5)$\Leftrightarrow$(6) follows from \cite[Theorem 4.3]{KT18}.
Since $\taun$ is a linear Hausdorff topology on $X$, (4)$\Leftrightarrow$(5) follows from \cite[p. 49]{Kelley:76}.
Furthermore, while \Cref{splosni} yields the implication (3)$\Rightarrow$(6), 
the implication (7)$\Rightarrow$(8) is obvious. 

(6)$\Rightarrow$(7) Suppose that $\mathcal D=\{u_n:\; n\in \mathbb N\}$ is an at most countable quasi-interior set in $X$ and pick a disjoint family  $\{x_\lambda:\;\lambda\in\Lambda\}$ of positive vectors in $X$. Then for each $n\in\mathbb N$ the family  $\{u_n\wedge x_\lambda:\;\lambda\in\Lambda\}$ is an order bounded disjoint family in $X$. For all $n,m\in \mathbb N$ we consider
the set
$$\Lambda_{n,m}:=\{\lambda\in \Lambda:\; \|u_n\wedge x_\lambda\|\geq \tfrac 1m\}.$$
Fix $n\in \mathbb N$ and suppose that $\Lambda_{n,m_0}$ is infinite for some $m_0\in \mathbb N$. Take any sequence $(y_k)_{k\in \mathbb N}$ in $\{u_n\wedge x_\lambda:\; \lambda \in \Lambda_{n,m_0}\}$ of distinct vectors. Since $X$ is order continuous, by \cite[Theorem 64.3]{LuxNotes2} the norm completion $\widehat X$ is also order continuous, so that the disjoint sequence $(y_k)_{k\in\mathbb N}$ converges in norm to zero by Theorem (see \cite[Theorem 4.14]{Aliprantis:06}). This contradicts the fact that $\|y_k\|\geq \frac{1}{m_0}$ for each $k\in \mathbb N$. Therefore, for all $n,m\in\mathbb N$ the set $\Lambda_{n,m}$ is at most countable.

Now consider the set $\Lambda_0:=\bigcup_{n,m=1}^\infty \Lambda_{n,m}$ which is clearly countable. If $\lambda\in \Lambda\setminus \Lambda_0$, then $x_\lambda\wedge u_n=0$ for each $n\in \mathbb N$. Since $\mathcal D$ is a quasi-interior set, we conclude that $x_\lambda=0$, from where it follows that $\Lambda$ is at most countable.

(8)$\Rightarrow$(1) Pick a maximal set $\mathcal A$ of atoms in $X$. Since $X$ is infinite-dimensional, we may enumerate the elements of $\mathcal A$ as $(e_n)_{n\in\mathbb N}$. For each $n\in \mathbb N$ consider the linear span $\mathcal J_n$ of the set $\{e_1,\ldots,e_n\}$. Since $\mathcal J_n$ is spanned by atoms, $\mathcal J_n$ is a projection band in $X$. Since each $\mathcal J_n$ is finite-dimensional, the union $F:=\bigcup_{n=1}^\infty \mathcal J_n$ is a separable subspace of $X$. Since $X$ is atomic, $F$ is order dense in $X$, and since the norm on $X$ is order continuous, $F$ is norm dense in $X$. This proves separability of $X$. 
\end{proof}

\begin{rem}
Some implications in \Cref{atomic un} do not hold if we relax certain assumptions. 

(1) The implications (6)$\Rightarrow$(7) and (8)$\Rightarrow$(1) in \Cref{atomic un} do not necessarily hold if we replace order continuity of the norm with Dedekind completeness. 
To see this, for a given index set $I$ consider the Dedekind complete Banach lattice $\ell^\infty(I)$ of all bounded sequences over the index set $I$. Clearly,  $\ell^\infty(I)$ contains a strong unit. 

If $I$ is uncountable, then $\ell^\infty(I)$ contains uncountably many pairwise disjoint atoms. 
On the other hand, in $\ell^\infty:=\ell^\infty(\mathbb N)$ every disjoint set of atoms is at most countable, yet  $\ell^\infty$ is not separable.

(2)  The implication (7)$\Rightarrow$(1) does not hold for order continuous Banach lattices without atoms. 
To see this, consider the circle $\mathbb S^1$ equipped with the Borel $\sigma$-algebra and the arc-length measure. Furthermore, consider the uncountable product of circles equipped with the product $\sigma$-algebra and the product probability measure $\mu$.  Since the coordinate functions are pairwise orthogonal, the Hilbert space $L^2(\mu)$ is not separable. Since the constant one function is a quasi-interior point in $L^2(\mu)$  similar arguments as in the proof of implication  (6)$\Rightarrow$(7) in \Cref{atomic un} show that every pairwise disjoint set in $L^2(\mu)$ is countable. 
\end{rem}

\section{Components of positive vectors and separability of un-topology}\label{Section components separability}

In \Cref{splosno separabilnost} we considered separability of un-topology in comparison with general topological properties of norm topology and un-topology. In this section we study separability of un-topology of normed lattices with the principal projection property in terms of separability of ``special" metric spaces which can be considered as their building blocks. These metric spaces are precisely spaces of components of positive vectors equipped with the metric induced by the norm of the underlying normed lattice. The motivation comes from Measure theory.

Given a finite measure space $(\Omega,\mathcal F,\mu)$ one can define the map $d_\mu\colon \mathcal F\times \mathcal F\to [0,\mu(\Omega)]$ as
$$d_\mu(A,B):=\|\chi_{A\triangle B}\|_1=\mu(A\setminus B)+\mu(B\setminus A).$$
It is easy to see that $d_\mu$ is a semi-metric on $\mathcal F$. On $\mathcal F$ we introduce the equivalence relation $\sim$ with
$$A\sim B \qquad \Leftrightarrow \qquad d_\mu(A,B)=0.$$
Then the semi-metric $d_\mu$ induces the well-defined metric $d$ on $\mathcal F/_\sim$ defined as
$$d([A],[B]):=d_\mu(A,B).$$ It should be clear that
the semi-metric space $(\mathcal F,d_\mu)$ is separable if and only if the metric space $(\mathcal F/_\sim,d)$ is separable.

The following proposition which immediately follows from Theorem 46.4 and Theorem 46.5 proved by Luxemburg in \cite{Lux14b} presents an important connection between separability of the norm topology (or equivalently, separability of the un-topology) and the local structure of normed lattices.

\begin{prop}\label{separability of components L1}
Suppose $(\Omega,\mathcal F,\mu)$ is a finite measure space.
Then $L^1(\mu)$ is separable if and only if the semi-metric space $(\mathcal F,d_\mu)$ is separable.
\end{prop}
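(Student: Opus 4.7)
The plan is to transport separability across the canonical map $\Phi\colon \mathcal F \to L^1(\mu)$ defined by $\Phi(A) = \chi_A$. This map is a semi-metric isometry, since
$$\|\chi_A - \chi_B\|_1 = \int |\chi_A - \chi_B|\,d\mu = \mu(A \triangle B) = d_\mu(A,B).$$
Thus $(\mathcal F, d_\mu)$ is semi-metrically isometric to the subset $\Phi(\mathcal F) \subseteq L^1(\mu)$, and the task is to show that separability transfers in both directions across this identification.

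For the forward implication, if $L^1(\mu)$ is norm separable then every subset, including $\Phi(\mathcal F)$, is separable in the induced pseudometric. Pulling a countable dense subset of $\Phi(\mathcal F)$ back through $\Phi$ yields a countable $d_\mu$-dense subset of $\mathcal F$.

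For the reverse implication, let $\{A_n : n \in \mathbb N\}$ be a countable $d_\mu$-dense subset of $\mathcal F$, and consider the countable set
$$\mathcal D := \left\{ \sum_{i=1}^k q_i \chi_{A_{n_i}} : k \in \mathbb N,\ q_1,\dots,q_k \in \mathbb Q,\ n_1,\dots,n_k \in \mathbb N \right\}.$$
To show $\mathcal D$ is norm dense in $L^1(\mu)$, I would invoke the standard density of simple functions $s = \sum_{i=1}^k c_i \chi_{B_i}$ (with $B_i \in \mathcal F$ and $c_i \in \mathbb R$) in $L^1(\mu)$, and then approximate each such $s$ by an element of $\mathcal D$. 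Given $\epsilon > 0$, pick indices $n_i$ with $d_\mu(A_{n_i}, B_i) < \epsilon$ and rationals $q_i$ with $|c_i - q_i| < \epsilon$; a triangle-inequality argument then bounds
$$\Big\| s - \sum_{i=1}^k q_i \chi_{A_{n_i}} \Big\|_1 \leq \sum_{i=1}^k \bigl( |c_i - q_i|\,\mu(B_i) + |q_i|\,d_\mu(A_{n_i}, B_i) \bigr),$$
which is controlled because $\mu(\Omega) < \infty$.

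The argument is essentially routine, with no real obstacle. The only care required is in managing the double approximation simultaneously in sets and in coefficients, where finiteness of $\mu$ is crucial in order to bound the set-approximation contribution in $L^1$-norm uniformly in the coefficients. A slightly more conceptual alternative would be to observe that the ideal in $L^1(\mu)$ generated by $\{\chi_{A_n} : n \in \mathbb N\}$ is norm dense, so that taking $\mathbb Q$-linear combinations inside it suffices; but the elementary estimate above is quite direct and self-contained.
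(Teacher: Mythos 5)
Your proof is correct. Note that the paper does not actually prove this proposition: it simply records that the statement ``immediately follows'' from Theorems 46.4 and 46.5 of Luxemburg's \emph{Notes on Banach function spaces XIV B}, and only remarks afterwards that characteristic functions (equivalently, their representing sets in $\mathcal F$) play the key role. Your argument is a self-contained version of exactly that idea: the map $A\mapsto\chi_A$ is a semi-metric isometry of $(\mathcal F,d_\mu)$ onto a subset of $L^1(\mu)$, the forward direction is hereditary separability of metric spaces (with the harmless caveat that $\Phi$ need not be injective, so one chooses a preimage for each point of a countable dense set), and the reverse direction combines density of simple functions with a double rational/set approximation. Your displayed estimate is correct, and finiteness of $\mu$ enters exactly where you say it does, namely in bounding $\mu(B_i)\le\mu(\Omega)$ and in guaranteeing that every $\chi_B$ lies in $L^1(\mu)$ in the first place. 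The elementary route you take is arguably preferable to the paper's bare citation, since it makes transparent which hypotheses are used; your suggested alternative via density of the ideal generated by $\{\chi_{A_n}\}$ would also work and is closer in spirit to how the paper later generalizes this statement to components of positive vectors in Theorem 4.4.
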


Since each $L^1$-function is a limit of a sequence of step functions, the key role in the proof of \Cref{separability of components L1} actually play characteristic functions in $L^1(\mu)$ or equivalently their representing sets in $\mathcal F$. The concept of a characteristic function can be fruitfully generalized to the realm of vector lattices.
Given a positive vector $u$ of a vector lattice $X$, a positive vector $x$ is  a \term{component} of $u$ if $0\leq x\leq u$ and $x\perp (u-x)$. Since for each measurable set $A\subseteq X$ we have $\chi_A+\chi_{\Omega\setminus A}=\one$ and $\chi_A \wedge \chi_{\Omega\setminus A}=0$, the notion of a component of a positive vector naturally extends the notion of a characteristic function to vector lattices. The set of all components of a given vector $u$ is denoted by $C_u$. It should be clear that we always have $0,u\in C_u$. A component $x\in C_u$ is called \term{non-trivial} if $x$ is neither $0$ nor $u$. A linear combination of components of $u$ is called a \term{$u$-step function}. It is easy to see that we can always take pairwise disjoint components when working with a particular $u$-step function.  


%

%
%

The following result is clearly a generalization of \Cref{separability of components L1}. It will be used in \Cref{separabilityBFS} in order to obtain a characterization of separability of order continuous Banach function spaces over semi-finite measure spaces.

\begin{thm}\label{separability in terms of components}
For a normed lattice $X$ with the principal projection property the following statements are equivalent.
\begin{enumerate}
  \item $X$ is norm separable.
  \item $(X,\taun)$ is separable.
  \item $X$ contains a countable quasi-interior set and for each countable quasi-interior set $\mathcal Q$  and each positive vector $u\in \mathcal Q$ the metric space $(C_u,\rho)$ is separable.
  \item[(3')] $X$ contains a countable quasi-interior set $\mathcal Q$ such that for each positive vector $u\in \mathcal Q$ the metric space $(C_u,\rho)$ is separable.
\end{enumerate}
Moreover, if $X$ is a Banach lattice, then (1)-(3') are equivalent to each of the following statements.
\begin{enumerate}\setcounter{enumi}{3}
  \item $X$ contains a quasi-interior point and for each quasi-interior point $u$ the metric space $(C_u,\rho)$ is separable.
  \item[(4')] $X$ contains a quasi-interior point $u$ such that the metric space $(C_u,\rho)$ is separable.
\end{enumerate}
\end{thm}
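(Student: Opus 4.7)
The plan is to establish the equivalences via the cycle $(1)\Rightarrow(3)\Rightarrow(3')\Rightarrow(1)$, using $(1)\Leftrightarrow(2)$ from \Cref{sep norm un}; the Banach-lattice additions $(4)$ and $(4')$ fit into the same cycle through $(1)\Rightarrow(4)\Rightarrow(4')\Rightarrow(3')$, with $(1)\Rightarrow(4)$ supplied by \Cref{splosni}(2). The direction $(1)\Rightarrow(3)$ is routine: a norm-separable $X$ admits a countable quasi-interior set by the construction in the proof of \Cref{splosni}(1), and every $C_u\subseteq X$ is separable because separable metric spaces are hereditarily separable. The remaining implications $(3)\Rightarrow(3')$ and $(4)\Rightarrow(4')\Rightarrow(3')$ are immediate.

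The substantive work is $(3')\Rightarrow(1)$. First I would prove a key lemma: in a normed lattice with the principal projection property, separability of $C_u$ and $C_v$ implies separability of $C_{u+v}$. Given $e\in C_{u+v}$, the principal projection property applied to the principal band $B_e$ furnishes the band projection $P=P_{B_e}$; one has $Pe=e$, and since $u+v-e$ is disjoint from $e$ (by $e\wedge(u+v-e)=0$) it lies in $B_e^d$, so $P(u+v-e)=0$. Hence $e=P(u+v)=Pu+Pv$. Moreover $Pu\in C_u$ because $0\leq Pu\leq u$ and $Pu\wedge(u-Pu)=0$ (as $Pu\in B_e$ while $u-Pu\in B_e^d$), and analogously $Pv\in C_v$. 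Thus $C_{u+v}\subseteq C_u+C_v$, and the right-hand side is separable as the continuous image under addition of the separable metric space $C_u\times C_v$; any subset of a separable metric space is separable.

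Next, let $\mathcal{Q}=\{u_n:n\in\mathbb{N}\}$ be the countable quasi-interior set from $(3')$ and set $v_n:=u_1+\cdots+u_n$. Iterating the lemma shows every $C_{v_n}$ is separable. Choose a countable norm-dense $D_n\subseteq C_{v_n}$ and let $T_n$ be the countable family of finite $\mathbb{Q}$-linear combinations of elements of $D_n$. By Freudenthal's spectral theorem, which is available since $X$ has the principal projection property, every $x\in I_{v_n}$ is a limit of $v_n$-step functions with respect to the $v_n$-uniform seminorm $\|y\|_{v_n}=\inf\{\lambda>0:|y|\leq\lambda v_n\}$; since $\|y\|\leq\|v_n\|\cdot\|y\|_{v_n}$ on $I_{v_n}$, this yields norm density of the $\mathbb{R}$-span of $C_{v_n}$ in $I_{v_n}$, and a standard coefficient-and-component approximation then gives norm density of $T_n$ in $I_{v_n}$.

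Finally, $T:=\bigcup_n T_n$ is countable, and each $x\in I_{\mathcal{Q}}$ satisfies $|x|\leq\Lambda v_N$ for some $N\in\mathbb{N}$ and $\Lambda>0$, placing $x\in I_{v_N}$ and hence in the norm closure of $T_N\subseteq T$; since $I_{\mathcal{Q}}$ is norm-dense in $X$, so is $T$. I expect the main obstacle to be the component-sum lemma, and in particular pinning down the representation $e=P(u+v)$, since this is where the principal projection property is genuinely used.
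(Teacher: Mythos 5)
Your argument is correct, and the implications $(1)\Leftrightarrow(2)$, $(1)\Rightarrow(3)\Rightarrow(3')$ and the Banach-lattice additions are handled exactly as in the paper. Where you genuinely diverge is in $(3')\Rightarrow(1)$: the paper first upgrades ``$C_{u}$ separable'' to ``$I_{u}$ separable'' for each single $u\in\mathcal Q$ via Freudenthal, and then sums at the level of ideals, using the Riesz-decomposition identity $I_{u_1+\cdots+u_n}=I_{u_1}+\cdots+I_{u_n}$ (which needs no projection bands) to conclude that each $I_{w_n}$ is separable; you instead sum at the level of component sets, proving the lemma $C_{u+v}\subseteq C_u+C_v$ by writing $e=P_{B_e}(u+v)=P_{B_e}u+P_{B_e}v$, and only then apply Freudenthal once to each $v_n=u_1+\cdots+u_n$. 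Your lemma is correct as stated (indeed $P_{B_e}(u+v-e)=0$ because $u+v-e\in B_e^{d}$, and $P_{B_e}u\in C_u$ since $u-P_{B_e}u\in B_e^{d}$), and it is a clean structural fact not recorded in the paper; the trade-off is that it invokes the principal projection property a second time, beyond its unavoidable use in Freudenthal's theorem, whereas the paper's ideal-level decomposition is valid in any vector lattice. Both routes terminate identically with the density of $\bigcup_n I_{v_n}=I_{\mathcal Q}$ in $X$ and a rational-coefficient approximation, so the two proofs are of comparable length and difficulty.
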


\begin{proof}
While the equivalence (1)$\Leftrightarrow$(2) follows from \Cref{sep norm un}, implications (3)$\Rightarrow$(3') and (4)$\Rightarrow$(4') are obvious.

(2)$\Rightarrow$(3)
Suppose that $X$ is $\taun$-separable. By \Cref{splosni}, $X$ contains a countable quasi-interior set. Let $\mathcal Q$ be any quasi-interior set and pick any positive vector $u$ from $\mathcal Q$. Then $C_u$ equipped with the metric $d$ induced by the norm is a subspace of the metric space $(X,d)$. Since $X$ is separable metric space, its  subspace $C_u$ is separable as well.

(3')$\Rightarrow$(1)
Pick any positive vector $u\in X^+$ such that $C_u$ is separable, and let $\mathcal Q$ be a countable dense set of $C_u$. Let $\mathcal G$ be the set of all linear combinations of vectors from $\mathcal Q$ with rational coefficients.
We claim that $\mathcal G$ is dense in $I_u$. To this end, pick any positive vector $x\in I_u$ and $\epsilon>0$.
By Freudenthal's spectral theorem (see e.g. \cite[Theorem 33.2]{Zaanen:96}) there exists an increasing sequence $(s_n)_{n\in\mathbb N}$ of $u$-step functions such that $0\leq s_n\uparrow x$ and that $s_n\to x$ $u$-uniformly. In particular, $s_n\to x$ in norm, so that the positive part of the linear span of $C_u$ is dense in $I_u^+$. Due to the equality $x=x^+-x^-$ we immediately obtain that the linear span of $C_u$ is dense in $I_u$. Since $C_u$ is separable, by a standard approximation argument one can conclude that $I_u$ is separable as well.

For the general case, let us enumerate the vectors from $\mathcal Q$ as $\{u_1,u_2,\ldots\}$. For each $n\in \mathbb N$ we denote by $w_n$ the vector $u_1+\cdots+u_n$. From the identity $I_{w_n}=I_{u_1}+\cdots+I_{u_n}$ and the fact that the deal $I_{u_k}$ is separable for each $k\in\mathbb N$, we conclude that the ideal $I_{w_n}$ is separable as well. From this we immediately derive that the increasing union $X_0:=\bigcup_{n=1}^\infty I_{w_n}$ is separable in $X$.
Since $X_0$ is norm dense in $X$, we obtain separability of $X$.

Suppose now that $X$ is also norm complete. For the moreover statement it is enough to see that (1) implies (4). This follows immediately from the fact that every separable Banach lattice has a quasi-interior point and from the implication (1)$\Rightarrow$(3).
\end{proof}

The following example shows that \Cref{separability in terms of components} does not hold if one does not assume that the underlying normed lattice has the principal projection property.

\begin{ex}
Let $K$ be a product of uncountably many copies of the interval $[0,1]$. Since $K$ is connected, we have $C_{\one}=\{\zero,\one\}$, so that $C_{\one}$ is separable.
On the other hand, $C(K)$ is not separable as $K$ is not metrizable (see \cite[Theorem 4.1.3]{AK06}).
\end{ex}

\section{Applications to Banach function spaces}\label{section: BFS}

In this section we characterize separable Banach function spaces over semi-finite measure spaces in terms of measure-theoretical properties of the underlying measure space.
Given a measure space $(\Omega,\mathcal F,\mu)$, a \term{function space} $X$ is an (order) ideal in $L^0(\mu)$. A function space $X$ is a \term{Banach function space} if it is equipped with a complete lattice norm. The following lemma (see \cite[Theorem 46.2]{Lux14b}) shows that only order continuous Banach function spaces can be separable. 

\begin{lemma}
Every separable Banach function space in $L^0(\mu)$ is order continuous. 
\end{lemma}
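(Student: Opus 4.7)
The plan is to prove the contrapositive: if $X$ is not order continuous, then $X$ cannot be norm separable. The main tool will be the standard characterization of order continuity in Banach lattices via disjoint order bounded sequences, namely \cite[Theorem 4.14]{Aliprantis:06}, which is already cited in the paper: if $X$ is not order continuous, then there exist a vector $u\in X^+$ and a disjoint sequence $(x_n)_{n\in\mathbb N}$ in $X^+$ with $0\leq x_n\leq u$ for all $n$ such that $\|x_n\|\not\to 0$. Passing to a subsequence, I may assume $\|x_n\|\geq c$ for some $c>0$ and all $n\in\mathbb N$.

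Next I would exploit that $X$ is an ideal in $L^0(\mu)$ to manufacture uncountably many well-separated vectors from such a disjoint sequence. For each subset $S\subseteq \mathbb N$ set
\[
y_S:=\sum_{n\in S} x_n,
\]
where the sum is formed pointwise $\mu$-a.e.\ in $L^0(\mu)$. Since the $x_n$ are pairwise disjoint and all dominated by $u$, any finite partial sum equals the supremum of the corresponding finite subfamily and is therefore majorized by $u$; consequently $y_S\leq u$ in $L^0(\mu)$. Because $X$ is an ideal in $L^0(\mu)$ containing $u$, this forces $y_S\in X$ for every $S\subseteq \mathbb N$.

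The final step is to show that $\{y_S:S\subseteq\mathbb N\}$ is a $c$-separated family. For distinct subsets $S,T\subseteq\mathbb N$, pick $n\in S\triangle T$; disjointness of the $x_k$ together with positivity gives
\[
|y_S-y_T|=\sum_{k\in S\triangle T} x_k\geq x_n,
\]
and hence $\|y_S-y_T\|\geq \|x_n\|\geq c$ by the lattice norm inequality. Since $|2^{\mathbb N}|$ is uncountable, this exhibits an uncountable $c$-separated subset of $X$, contradicting separability.

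I do not anticipate a genuine obstacle here; the only subtlety is making sure that the sum $y_S$ really belongs to the Banach function space $X$, and this is exactly where disjointness plus boundedness above by $u\in X$, combined with the ideal property, gets used. The argument is essentially the standard trick showing that a non-order-continuous Banach lattice contains an isomorphic copy of $\ell^\infty$, specialized to the Banach function space setting where the pointwise sum $\sum_{n\in S} x_n$ makes immediate sense in $L^0(\mu)$.
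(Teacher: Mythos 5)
Your proof is correct, and it is worth noting that the paper itself offers no argument for this lemma at all --- it simply cites \cite[Theorem 46.2]{Lux14b} --- so your write-up supplies a self-contained proof where the paper relies on the literature. The mechanism you use is the standard one: failure of order continuity yields, via \cite[Theorem 4.14]{Aliprantis:06}, an order bounded disjoint sequence $(x_n)$ in $X^+$ with $\|x_n\|\geq c>0$ after passing to a subsequence, and the concrete realization of $X$ as an ideal in $L^0(\mu)$ is exactly what lets you form the uncountably many sums $y_S=\sum_{n\in S}x_n$ inside $X$: pairwise disjointness of nonnegative functions means a.e.\ disjoint supports, so the pointwise sum is well defined, each finite partial sum equals the finite supremum of elements dominated by $u$ and hence is $\leq u$, and the ideal property then forces $y_S\in X$. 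The separation estimate $\|y_S-y_T\|\geq c$ for $S\neq T$ follows from $|y_S-y_T|\geq x_n$ for any $n\in S\triangle T$ together with the lattice norm axiom, so $X$ contains an uncountable uniformly separated family and cannot be separable. As you observe, this is the same device that produces a lattice copy of $\ell^\infty$ in a non-order-continuous $\sigma$-Dedekind complete Banach lattice; here the ideal-in-$L^0(\mu)$ hypothesis substitutes for $\sigma$-Dedekind completeness in guaranteeing that the infinite disjoint sums exist. I see no gaps.
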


Let $X$ be an order dense order continuous Banach function space in $L^0(\mu)$ where $\mu$ is finite. Norm separability of $X$ is equivalent to separability of $X$ equipped with un-topology. An application of \cite[Theorem 5.2]{KLT}
yields that $X$ is separable if and only if $X$ is separable with respect to the topology $\tau_\mu$ of convergence in measure.  In fact, separability of $(X,\taun)$ depends only on $L^0(\mu)$. 

\begin{lemma}\label{separabilnost gostost X L}
Let $(\Omega,\mathcal F,\mu)$ be a finite measure space and let $X$ be an order dense order continuous Banach function space in $L^0(\mu)$.
Then $X$ is norm separable if and only if $L^0(\mu)$ equipped with the topology of convergence in measure is separable.
\end{lemma}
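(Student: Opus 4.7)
The strategy is to piece together three observations. First, the paragraph preceding this lemma has already carried out the main reduction: by \Cref{sep norm un}, $X$ is norm separable iff $(X,\taun)$ is separable, and by \cite[Theorem 5.2]{KLT} the un-topology on $X$ coincides with $\tau_\mu|_X$ when $\mu$ is finite. Hence the real task is to show that $X$ is $\tau_\mu$-separable iff $L^0(\mu)$ is $\tau_\mu$-separable.

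The key step will be to verify that $X$ is $\tau_\mu$-dense in $L^0(\mu)$. For any $f\in L^0(\mu)^+$, order density of $X$ in $L^0(\mu)$ produces a net $(x_\alpha)\subseteq X^+$ with $0\le x_\alpha\uparrow f$. Since $\mu$ is finite, $L^0(\mu)$ enjoys the countable sup property, so one may extract a sequence $(x_n)\subseteq X^+$ with $0\le x_n\uparrow f$ $\mu$-a.e. Monotone a.e.\ convergence in a finite measure space implies convergence in measure, hence $x_n\xrightarrow{\tau_\mu} f$, and extending by linearity $X$ is $\tau_\mu$-dense in $L^0(\mu)$.

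With $\tau_\mu$-density in hand, both directions of the lemma follow from standard facts about separable metric spaces. Since $\mu$ is finite, $(L^0(\mu),\tau_\mu)$ is metrizable; separability of a metric space passes to subspaces, giving ``$L^0(\mu)$ $\tau_\mu$-separable $\Rightarrow X$ $\tau_\mu$-separable''. For the converse, any countable $\tau_\mu$-dense subset of $X$ is automatically $\tau_\mu$-dense in $L^0(\mu)$ by the density established above. The principal obstacle is the density argument; everything else is routine metric-topology bookkeeping, together with the cited identification $\taun|_X=\tau_\mu|_X$ from \cite{KLT}.
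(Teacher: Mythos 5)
Your proposal is correct and follows essentially the same route as the paper: reduce to $\tau_\mu$ via \Cref{sep norm un} and the identification $\taun|_X=\tau_\mu|_X$ from \cite[Theorem 5.2]{KLT}, prove $\tau_\mu$-density of $X$ in $L^0(\mu)$ using the countable sup property and the fact that monotone a.e.\ convergence implies convergence in measure for finite $\mu$, and use hereditary separability of metrizable spaces for the other direction. The only cosmetic difference is that the paper justifies metrizability of $(L^0(\mu),\tau_\mu)$ via \cite[Theorem 3.3]{KLT} with an explicit metric built from a weak unit of $X$, whereas you invoke the classical metrizability of convergence in measure on a finite measure space; both are valid.
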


\begin{proof}
Suppose first that $L^0(\mu)$ is separable. Since $\mu$ is finite and $X$ is order continuous, by \cite[Theorem 5.2]{KLT} the topology of convergence in measure is the un-topology on $L^0(\mu)$ induced by $X$.

Pick a weak unit $g$ in $X$. Due to order density of $X$ in $L^0(\mu)$ we have that $g$ is also a weak order unit in $L^0(\mu)$. Since the norm of $X$ is order continuous, $g$ is a quasi-interior point in $X$. Hence, by \cite[Theorem 3.3]{KLT} and its proof we conclude that the topology of convergence in measure on $L^0(\mu)$ is metrizable, and that the associated metric is given by
$$d_g(f_1,f_2)=\||f_1-f_2|\wedge g\|_X.$$

Since the un-topology on $X$ is just the restriction of un-topology induced by $X$ on $L^0(\mu)$, the un-topology on $X$ is precisely the topology of convergence in measure restricted to $X$. Since $(X,\tau_\mu|_X)$ is separable as a subspace of a separable metric space by \Cref{sep norm un} we conclude that $X$ is norm separable.

To prove the converse statement, assume that $X$ is norm separable. By \Cref{sep norm un} it follows that $X$ is $\taun$-separable. To finish the proof it suffices to prove that $X$ is $\tau_\mu$-dense in $L^0(\mu)$.
Pick any non-negative function $f\in L^0(\mu)$. Since $X$ is order dense in $L^0(\mu)$ and $L^0(\mu)$ has the countable sup property, there is an increasing sequence $(f_n)_{n\in \mathbb N}$ in $X$ such that $f_n\uparrow f$. This yields that $f_n\to f$ $\mu$-almost everywhere, and since
the measure $\mu$ is finite, by a consequence of the well-known Egorov's theorem we conclude that $f_n\to f$ in measure.
\end{proof}

\begin{prop}\label{finite measure separability X}
Let $(\Omega,\mathcal F,\mu)$ be a finite measure space. For an order continuous order dense Banach function space $X$ in $L^0(\mu)$ the following assertions are equivalent.
\begin{enumerate}
  \item $X$ is norm separable.
  \item $X$ is separable with respect to the topology of convergence in measure.
  \item The semi-metric space $(\mathcal F,d_\mu)$ is separable.
  \item The metric space $(C_f,\rho)$ is separable for each strictly positive function $f\in X$.
\end{enumerate}
If $\mathcal F$ is countably generated, then all the statements (1)-(4) hold.
\end{prop}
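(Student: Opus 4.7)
The plan is to route all four conditions through \Cref{separabilnost gostost X L}, which identifies norm separability of any order dense order continuous Banach function space in $L^0(\mu)$ with $\tau_\mu$-separability of $L^0(\mu)$ itself. For (1)$\Leftrightarrow$(2), I will observe that $X$ is $\tau_\mu$-dense in $L^0(\mu)$: this is precisely the density argument appearing in the proof of \Cref{separabilnost gostost X L}, using order density of $X$, the countable sup property of $L^0(\mu)$, and Egorov's theorem. Hence a countable $\tau_\mu$-dense subset of $L^0(\mu)$ restricts to one for $X$ via the subspace topology, and conversely any countable $\tau_\mu$-dense subset of $X$ remains dense in $L^0(\mu)$; combined with the lemma this gives the equivalence of (1) and (2).

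For (1)$\Leftrightarrow$(3), I will apply \Cref{separabilnost gostost X L} a second time to the auxiliary space $L^1(\mu)$, which, since $\mu$ is finite, is itself an order dense order continuous Banach function space in $L^0(\mu)$. The lemma then yields the chain: $X$ is norm separable $\Leftrightarrow$ $L^0(\mu)$ is $\tau_\mu$-separable $\Leftrightarrow$ $L^1(\mu)$ is norm separable; \Cref{separability of components L1} closes the loop by identifying norm separability of $L^1(\mu)$ with separability of $(\mathcal F,d_\mu)$.

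For (1)$\Leftrightarrow$(4) I will rely on \Cref{separability in terms of components}. First I must check that $X$ has the principal projection property; this is inherited from $L^0(\mu)$, since for every $u\in X^+$ the ambient band projection $f\mapsto \chi_{\{u>0\}}\,f$ onto $B_u$ maps the ideal $X$ into itself. Second, I must exhibit a quasi-interior point of $X$: the countable sup property of $L^0(\mu)$ allows me to pick a \emph{countable} maximal disjoint family $\{f_n\}$ in $X^+$, and the absolutely convergent series $g=\sum_n \tfrac{f_n}{2^n(\norm{f_n}+1)}$ lies in $X$, is a weak unit in $X$ (hence in $L^0(\mu)$ by order density, so $g>0$ $\mu$-a.e.), and is consequently strictly positive and a quasi-interior point of $X$ by order continuity. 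The implication (1)$\Rightarrow$(4) is immediate because each $C_f$ is a subspace of the separable metric space $X$; the implication (4)$\Rightarrow$(1) reduces to (4')$\Rightarrow$(1) of \Cref{separability in terms of components}, applied to the strictly positive quasi-interior point $g$ just built. The final assertion is handled separately: if $\mathcal F$ is generated by a countable family $\{A_n\}$, then the (countable) Boolean algebra generated by $\{A_n\}$ is $d_\mu$-dense in $\mathcal F$ by the standard approximation theorem, so (3) holds and all four conditions follow.

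The most delicate point, I expect, will be the construction of the strictly positive quasi-interior function $g$, because the hypotheses only supply order density, order continuity, and a complete lattice norm; it is the interplay of the countable sup property of $L^0(\mu)$ with completeness of $X$ that makes the normalized series converge and land inside $X$. Everything else is bookkeeping with the lemmas already proved.
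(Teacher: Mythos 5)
Your proposal is correct and follows essentially the same route as the paper: both prove the equivalences by combining \Cref{separabilnost gostost X L} with \Cref{separability in terms of components}, using $L^1(\mu)$ and its components $C_\one$ as the intermediary between (1) and (3), and the standard Boolean-algebra approximation for the countably generated case. The only difference is that you spell out details the paper leaves implicit — the inheritance of the principal projection property from $L^0(\mu)$ and the construction of a strictly positive quasi-interior point via a countable maximal disjoint family — which is a welcome addition rather than a deviation.
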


\begin{proof}
Equivalences between (1), (2) and (4) follow from \Cref{separability in terms of components}.

(3)$\Leftrightarrow$(1) If $(\mathcal F,d_\mu)$ is separable, then $(C_\one,\rho)$ is separable in $L^1(\mu)$. The implication (4)$\Rightarrow$(1) in the case of the Banach lattice $L^1(\mu)$ yields that $L^1(\mu)$ is norm separable, and so by an application of \Cref{separabilnost gostost X L} we conclude that $X$ is norm separable. The proof of the converse statement can be proved similarly and is therefore omitted.

Although the proof that the additional assertion implies (3) is well known, we will provide its sketch solely for the sake of completeness.
Suppose that a countable set $\mathcal F_0$ generates $\mathcal F$. Let $\mathcal F_1$ be the family of all sets which are obtained after finitely many steps of effecting Boolean operations on the sets of $\mathcal F_0$.  Since $\mathcal F_0$ is countable, $\mathcal F_1$ is countable as well. Let $\mathcal F_2$ be the closure of $\mathcal F_1$ in $\mathcal F$ with respect to the semi-metric $d_\mu$. The sets from $\mathcal F_2$ are precisely the sets which are approximable by sets from $\mathcal F_1$. Since the measure $\mu$ is finite, the complement of an approximable set is again an approximable set. Also, countable unions of approximable sets are also approximable. Hence, $\mathcal F_2$ is a $\sigma$-subalgebra in $\mathcal F$. Since $\mathcal F_2$ contains $\mathcal F_0$, we conclude $\mathcal F_2=\mathcal F$, and so $\mathcal F$ is separable as it is the closure of the countable set $\mathcal F_1$.
\end{proof}

In \Cref{separabilityBFS} we will extend \Cref{finite measure separability X} to order continuous Banach function spaces over semi-finite measures. First we need a general result about semi-finite measures. 

\begin{lemma}\label{semifinite non sigmafinite}
Let $(\Omega,\mathcal F,\mu)$ be a semi-finite measure space which is not $\sigma$-finite. Then there exists an uncountable disjoint family of measurable sets in $\Omega$ of finite positive measure.
\end{lemma}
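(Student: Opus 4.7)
The plan is to invoke Zorn's lemma to produce a maximal disjoint family of measurable sets of finite positive measure, and then show that this family cannot be countable by playing the semi-finiteness of $\mu$ against the failure of $\sigma$-finiteness.

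First I would consider the poset $\mathcal P$ of all disjoint families $\{E_\lambda\}_{\lambda\in\Lambda}\subseteq \mathcal F$ with $0<\mu(E_\lambda)<\infty$ for every $\lambda$, ordered by inclusion. The union of any chain in $\mathcal P$ is again a member of $\mathcal P$ (pairwise disjointness is preserved), so Zorn's lemma produces a maximal element $\{E_\lambda\}_{\lambda\in\Lambda}$. The goal is to show $\Lambda$ is uncountable.

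Suppose for contradiction that $\Lambda$ is at most countable. Put $E=\bigcup_{\lambda\in\Lambda}E_\lambda$, which is measurable as a countable union of measurable sets, and set $F=\Omega\setminus E$. Now split into three cases according to $\mu(F)$. If $\mu(F)=0$, then $\Omega=E\cup F$ is a countable union of sets of finite measure, contradicting the assumption that $\mu$ is not $\sigma$-finite. If $0<\mu(F)<\infty$, then $F$ itself is a measurable set of finite positive measure disjoint from every $E_\lambda$, so $\{E_\lambda\}_{\lambda\in\Lambda}\cup\{F\}$ is a strictly larger element of $\mathcal P$, contradicting maximality. Finally, if $\mu(F)=\infty$, then the semi-finiteness of $\mu$ produces a measurable set $F_0\subseteq F$ with $0<\mu(F_0)<\infty$; again $\{E_\lambda\}_{\lambda\in\Lambda}\cup\{F_0\}\in\mathcal P$ strictly extends the maximal family, a contradiction.

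All three cases are impossible, so $\Lambda$ must be uncountable, which yields the desired uncountable disjoint family. There is no serious obstacle here; the only point that needs care is to make sure that both hypotheses are actually used: semi-finiteness rules out the $\mu(F)=\infty$ case, while the failure of $\sigma$-finiteness rules out $\mu(F)=0$. The finite positive case is eliminated directly from maximality.
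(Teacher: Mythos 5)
Your proof is correct and follows essentially the same route as the paper: a Zorn's lemma maximality argument, followed by showing that a countable maximal family would contradict either maximality or the failure of $\sigma$-finiteness. Your explicit three-way case split on $\mu(F)$ is just a slightly more detailed version of the paper's treatment of the complement $\Omega\setminus E$.
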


\begin{proof}
Let $\mathcal A\subseteq \mathcal P(\Omega)$ be the family of all families of measurable pairwise disjoint subsets of $\Omega$ with finite positive measure.  Since $\mu$ is semi-finite, the family $\mathcal A$ is non-empty. The family $\mathcal A$ ordered by set inclusion is a partially ordered set. If $\mathcal C$ is a chain in $\mathcal A$, then it is obvious that the union of $\mathcal C$ is again in $\mathcal A$. Hence, by Zorn's lemma there exists a maximal element $\mathcal M$ in $\mathcal A$.

Suppose that $\mathcal M$ is countable. Since $E:=\bigcup_{F\in \mathcal M}F$ is a measurable subset of $\Omega$ and since $\mu$ is not $\sigma$-finite, the set $\Omega\setminus E$ has positive measure. Due to semi-finiteness of $\mu$ there exists a measurable set $F\subseteq \Omega\setminus E$ with a finite positive measure. This is in contradiction with maximality of $\mathcal M$.
\end{proof}

For a function space $X$ over a measure space $(\Omega,\mathcal F,\mu)$ and a measurable set $A\in\mathcal F$ we define the set $X_A=\{\chi_A f : f\in X\}$. Since $X$ is an ideal in $L^0(\mu)$, $X_A$ is an ideal in $X$, so that $X_A$ is a function space in $L^0(\mu)$. If $X$ is a Banach function space, then $X_A$ is a Banach function space as well. By $\mathcal F|_A$ we denote the relative $\sigma$-algebra on $A$, i.e., the $\sigma$-algebra of all measurable sets contained in $A$. Whenever $\mu(A)<\infty$, the restriction $d_{\mu|_A}$ of $d_\mu$ on $\mathcal F|_A$ induces a semi-metric. 

If $X$ is an order continuous Banach function space over a $\sigma$-finite measure space, then \cite[Theorem 5.2]{KLT} yields that $\taun$ agrees with the topology $\tau_\mu$ of local convergence in measure on sets of finite measure. An examination of its proof actually reveals that one can replace $\sigma$-finite measure spaces with semi-finite ones. Recall that a net $(f_\lambda)$ converges to $f$ in $L^0(\mu)$ with respect to the topology of local convergence in measure if $f_\lambda \chi_A \to f\chi_A$ in measure for each subset $A\in \mathcal F$ with finite measure.

\begin{lemma}\label{izboljsava 5.2}
Let $X$ be an order dense order continuous Banach function space over a semi-finite measure space. Then a net $(f_\alpha)$ in $X$ $\taun$-converges to zero if and only if it converges to zero with respect to the topology of local convergence in measure.
\end{lemma}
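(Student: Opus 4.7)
The plan is to adapt the proof of the $\sigma$-finite case \cite[Theorem 5.2]{KLT} by localizing it to finite-measure sets. The structural input that makes this reduction work is the observation, which I will establish first, that the support of every $g\in X^+$ is $\sigma$-finite. To see this, note that $\mathrm{supp}(g)=\bigcup_n\{g>1/n\}$ and $\chi_{\{g>1/n\}}\le ng\in X$, so $\chi_{\{g>1/n\}}\in X$. If some $\{g>1/n\}$ failed to be $\sigma$-finite, then \Cref{semifinite non sigmafinite} applied to the semi-finite restricted measure would yield an uncountable pairwise disjoint family of its finite-measure subsets, whose characteristic functions would form an order bounded, pairwise disjoint family in $X^+$ with strictly positive norms; this contradicts order continuity via \cite[Theorem 4.14]{Aliprantis:06}, since any order bounded disjoint sequence of such vectors would have to be norm null, forcing the index set to be at most countable.

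For the forward direction I fix $A\in\mathcal F$ with $\mu(A)<\infty$ and first construct $g_A\in X^+$ supported on $A$ and strictly positive $\mu$-a.e.\ on $A$: using order density of $X$ in $L^0(\mu)$ together with the finiteness of $\mu|_A$, a maximality argument on disjoint positive-measure subsets of $A$ carrying a strictly positive element of $X$ produces countably many disjoint sets exhausting $A$ modulo null, and a norm-summable normalized sum delivers $g_A$. Then $\taun$-convergence gives $\||f_\alpha|\wedge g_A\|_X\to 0$. Since $X_A$ is a Banach function space over the finite measure space $(A,\mu|_A)$, the closed graph theorem yields continuity of the embedding $X_A\hookrightarrow L^0(\mu|_A)$ (the closed graph hypothesis follows from extracting an a.e.\ convergent subsequence using absolute convergence of $\sum f_{n_k}$ in $X$ when $\|f_{n_k}\|_X<2^{-k}$). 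Hence $|f_\alpha|\wedge g_A\to 0$ in measure on $A$, and truncation to $A_\delta:=\{g_A\ge\delta\}$ together with $\mu(A\setminus A_\delta)\to 0$ as $\delta\to 0^+$ transfers the convergence to $f_\alpha\chi_A$ itself.

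For the reverse direction, I fix $g\in X^+$ and apply the support lemma to write $\mathrm{supp}(g)=\bigcup_n B_n$ with $\mu(B_n)<\infty$ and $B_n\uparrow$. Order continuity gives $g\chi_{B_n}\goesnorm g$, and the lattice estimate
$$\bigl\||f_\alpha|\wedge g\bigr\|_X\le\bigl\||f_\alpha|\wedge g\chi_{B_n}\bigr\|_X+\bigl\|g-g\chi_{B_n}\bigr\|_X$$
reduces the matter to showing $\||f_\alpha|\wedge g\chi_{B_n}\|_X\to 0$ for each fixed $n$. On the Banach function space $X_{B_n}$ over the finite measure space $(B_n,\mu|_{B_n})$, which is itself order continuous and order dense, the case already covered by \cite[Theorem 5.2]{KLT} identifies $\taun$ with convergence in measure; since the hypothesis gives $f_\alpha\chi_{B_n}\to 0$ in measure on $B_n$, this identification yields the required convergence after noting that $|f_\alpha|\wedge g\chi_{B_n}=|f_\alpha\chi_{B_n}|\wedge g\chi_{B_n}$.

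The hard part will be the support lemma: once it is in hand, the reverse direction is a routine cutoff and the forward direction reduces to a standard closed graph argument, both cleanly delegating the actual analytic content to the already known finite-measure case.
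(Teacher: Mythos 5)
Your proof is correct, and while it follows the same overall strategy as the paper---reduce everything to the finite-measure case settled by \cite[Theorem 5.2]{KLT}---the two reductions are carried out differently in both directions. For the backward implication the paper introduces the ideal $Z$ of all functions in $X$ vanishing outside a set of finite measure, shows $Z$ is order dense using semi-finiteness alone, upgrades this to norm density by order continuity, and then handles a general $g\in X^+$ by approximating it from $Z\cap[0,g]$. You instead prove the stronger structural fact that the support of each individual $g\in X^+$ is $\sigma$-finite, via \Cref{semifinite non sigmafinite} together with the disjoint-sequence characterization of order continuity from \cite[Theorem 4.14]{Aliprantis:06}, and then truncate $g$ to $g\chi_{B_n}$; since your cutoffs $g\chi_{B_n}$ are precisely elements of the paper's $Z$ lying below $g$, the two arguments converge to the same estimate, but your support lemma costs an extra uncountable-disjoint-family argument that the simpler order-density route avoids (the paper does deploy exactly that argument elsewhere, e.g.\ in the proof of \Cref{separabilityBFS}, so it is squarely within the toolkit of the text). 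For the forward implication the paper merely observes that $f_\alpha\chi_A$ $\taun$-converges to zero in the ideal $X_A$ and invokes \cite[Theorem 5.2]{KLT} over $(A,\mu|_A)$, whereas you re-derive that half from scratch by constructing a strictly positive $g_A\in X_A$ and running a closed-graph plus truncation argument; this is more self-contained but duplicates a result you are happy to cite in the other direction. All the individual steps you sketch (the positivity of the norms of the disjoint characteristic functions, the order density of $X_{B_n}$ in $L^0(\mu|_{B_n})$, the lattice inequality $a\wedge(b+c)\le a\wedge b+c$ behind your displayed estimate, and the identity $|f_\alpha|\wedge g\chi_{B_n}=|f_\alpha\chi_{B_n}|\wedge g\chi_{B_n}$) check out, so the argument is complete.
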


\begin{proof}
For the proof of the forward implication, pick a set $A$ of finite measure. Then $f_\alpha \wedge \chi_A\in X_A$, and since $f_\alpha\chi_A$ $\taun$-converges to zero in $X$, it also $\taun$-converges to zero in $X_A$. Since $\mu(A)<\infty$, \cite[Theorem 5.2]{KLT} yields that $(f_\alpha\chi_A)$ converges to zero in measure on $A$. This proves the forward implication. 

For the proof of the backward implication, assume that $(f_\alpha)$ converges to zero with respect to the topology of local convergence in measure. As in the proof of \cite[Theorem 5.2]{KLT} let $Z$ be the set of all functions in $X$ which vanish outside of a set of finite measure. We claim that $Z$ is a non-trivial ideal in $X$. Pick a non-zero function $f\in X^+$. Then there exists $n\in \mathbb N$ such that the set 
$A_n:=\{x\in \Omega:\; f(x)\geq \frac 1n\}$ has a positive measure.  Since $\mu$ is semi-finite, there exists a measurable set $A\subseteq A_n$ of finite positive measure.  Hence, $\chi_A \leq \chi_{A_n}\leq nf$ and since $X$ is an ideal in $L^0(\mu)$, we have that $\chi_A\in X$. This yields that $\chi_A\in Z$ and so $Z$ is non-trivial. The fact that $Z$ is an ideal in $X$ is clear. 

Now we claim that $|f_\alpha|\wedge g\to 0$ for each $g\in Z^+$. Since $g$ can be approximated by an increasing sequence of step functions and since $X$ is order continuous, it suffices to consider the case when $g$ itself is a step function. Furthermore, an application of the triangle inequality for the norm implies that we need to consider only the case when $g=\chi_A$ for some set $A$ of positive finite measure. Since $|f_\alpha|\wedge \chi_A\to 0$ in measure on $A$, by \cite[Theorem 5.2]{KLT} we have that $|f_\alpha|\wedge \chi_A\to 0$ in norm in $X_A$ and so in $X$. This proves that $|f_\alpha|\wedge g\to 0$ for each $g\in Z^+$. 

To conclude the proof, due to the standard approximation argument we need to show that $Z$ is norm dense in $X$. Since $X$ is order continuous, it suffices to prove that $Z$ is order dense in $X$. Pick a positive vector $f\in X^+$ and find a step function $s\in X^+$ such that $0<s\leq f$. Then there exists a measurable set $A$ and a scalar $\alpha>0$ such that 
$0<\alpha \chi_A\leq s\leq f$. Since $\mu$ is semi-finite, there is a subset $A'$ of $A$ of finite positive measure, from where it follows $0<\alpha \chi_{A'} \leq f$. To conclude the proof, note that we have $\chi_{A'}\in Z$.  
\end{proof}

The following two theorems are the main results of this section. 
They provide a characterization of separability of order continuous Banach function spaces.

\begin{thm}\label{separabilityBFS}
For an order continuous order dense Banach function space $X$ over a semi-finite measure space $(\Omega,\mathcal F,\mu)$ the following statements are equivalent.
\begin{enumerate}
  \item $X$ is norm separable.
  \item $X$ is separable with respect to the topology of local convergence in measure.
    \item The measure $\mu$ is $\sigma$-finite and the semi-metric space $(\mathcal F|_A,d_{\mu|_A})$ is separable for each set $A$ of finite measure.
  \item The measure $\mu$ is $\sigma$-finite and the function space $X_A$ is separable for each set $A$ of finite measure.
  \item $X$ admits strictly positive functions and the metric space $(C_f,\rho)$ is separable for any strictly positive function $f\in X$.
\end{enumerate}
\end{thm}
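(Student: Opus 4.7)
The plan is to organize the five equivalences into three clusters anchored at (1): the pair (1)$\Leftrightarrow$(2), the pair (1)$\Leftrightarrow$(5), and the cycle (1)$\Rightarrow$(3)$\Leftrightarrow$(4)$\Rightarrow$(1). The first cluster is immediate: \Cref{sep norm un} gives norm separability $\Leftrightarrow$ $\taun$-separability, and \Cref{izboljsava 5.2} identifies $\taun$ on $X$ with the topology of local convergence in measure, so (1)$\Leftrightarrow$(2) is essentially a restatement.

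For (1)$\Leftrightarrow$(5) I would first observe that $X$ inherits the principal projection property from $L^0(\mu)$: band projections in $L^0(\mu)$ are multiplications by characteristic functions, and these preserve the ideal $X$. I would next argue that, thanks to order continuity and to order density of $X$ in $L^0(\mu)$, a positive element of $X$ is a weak unit of $X$ precisely when it is a weak unit of $L^0(\mu)$, i.e.\ strictly positive, and in this case it is automatically a quasi-interior point. With these two remarks in hand, both directions reduce to the Banach lattice assertions (4) and (4$'$) in \Cref{separability in terms of components}, yielding (1)$\Leftrightarrow$(5).

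The cluster (1)$\Leftrightarrow$(3)$\Leftrightarrow$(4) begins with the $\sigma$-finiteness step. If $\mu$ were semi-finite but not $\sigma$-finite, then \Cref{semifinite non sigmafinite} would supply an uncountable pairwise disjoint family $\{A_\lambda\}$ of measurable sets with $0<\mu(A_\lambda)<\infty$; order density of $X$ in $L^0(\mu)$ lets me pick a nonzero $g_\lambda \in X^+$ supported on $A_\lambda$, and after renormalizing $\|g_\lambda\|=1$ the disjointness yields
$$\|g_\lambda-g_\mu\|=\|g_\lambda\vee g_\mu\|\geq \|g_\lambda\|=1 \quad (\lambda\neq \mu),$$
an uncountable $1$-separated set contradicting (1). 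Once $\sigma$-finiteness is in place, for each $A\in\mathcal F$ of finite measure $X_A$ is an order dense order continuous Banach function space over $(A,\mathcal F|_A,\mu|_A)$, so \Cref{finite measure separability X} (applied to $X_A$) routes (3)$\Leftrightarrow$(4) through separability of $X_A$ itself. To return from (4) to (1) I would exhaust $\Omega=\bigcup_n A_n$ by an increasing sequence of sets of finite measure and use order continuity to obtain $\chi_{A_n}f\to f$ in norm for every $f\in X$, so that $\bigcup_n X_{A_n}$ is a countable union of separable subspaces dense in $X$.

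The main obstacle will be the bootstrap from (1) to $\sigma$-finiteness of $\mu$: the hypothesis is purely topological while the conclusion is measure theoretic, and bridging the two requires both \Cref{semifinite non sigmafinite} (to manufacture an uncountable disjoint family of measurable sets) and order density of $X$ (to realize those sets by genuine nonzero positive elements of $X$ on which the lattice norm produces the desired discreteness). Every other implication is either a direct invocation of an earlier result in the paper or a standard density/exhaustion argument.
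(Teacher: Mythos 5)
Your proposal is correct and follows essentially the same route as the paper: (1)$\Leftrightarrow$(2) via \Cref{sep norm un} and \Cref{izboljsava 5.2}, the uncountable disjoint family from \Cref{semifinite non sigmafinite} realized inside $X$ by order density to force $\sigma$-finiteness, \Cref{finite measure separability X} for the finite-measure pieces, an exhaustion argument for (4)$\Rightarrow$(1), and \Cref{separability in terms of components} for (5). The only (harmless) variations are that you normalize the disjoint family to norm one where the paper stratifies by $\|g_E\|\geq 1/n$, and that you obtain a strictly positive function in (1)$\Rightarrow$(5) directly from the fact that a quasi-interior point of an order dense ideal over a semi-finite measure is a.e.\ strictly positive, rather than via $\sigma$-finiteness as the paper does.
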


\begin{proof}
While the equivalence (1)$\Leftrightarrow$(2) follows from \Cref{sep norm un} and \Cref{izboljsava 5.2}, the equivalence (3)$\Leftrightarrow$(4) follows from \Cref{finite measure separability X}.

(1)$\Rightarrow$(4) Assume that $\mu$ is not $\sigma$-finite. By \Cref{semifinite non sigmafinite} there exists an uncountable family $\mathcal M$ of pairwise disjoint sets of finite positive measure.
Since $X$ is order dense in $L^0(\mu)$, for each $E\in \mathcal M$ there exists a function $g_E\in X$ such that $0<g_E\leq \chi_{E}$.
It is clear that that functions $g_E$ and $g_{E'}$ are disjoint whenever $E\cap E'=\emptyset$.
For each $n\in \mathbb N$ we introduce the family of functions in $X$ as
$$\mathcal F_n:=\{g_E:\; E\in \mathcal M \textrm{ and } \|g_E\|_X \geq \tfrac{1}{n}\}.$$
Since $\mathcal M$ is uncountable, there exists $m\in \mathbb N$ such that $\mathcal F_m$ is uncountable. For each $g_E\in \mathcal F_n$ consider the open ball $U_E$ with center in $g_E$ and radius $\frac{1}{2m}$. We claim that for different sets $E$ and $E'$ the balls $U_E$ and $U_{E'}$ are disjoint. Indeed, if $f$ is in their intersection, then $\|g_E-g_{E'}\|\leq \|g_E-f\|+\|f-g_{E'}\|<\frac{1}{m}$. However, this is in contradiction with 
$$\|g_{E}-g_{E'}\|=\||g_E-g_{E'}|\|=\|g_E+g_{E'}\|\geq \|g_E\|\geq \tfrac 1m.$$
Since $X$ admits an uncountable family of pairwise disjoint open sets, it cannot be separable. 

To prove the remaining claim of (4), pick a set $A$ of finite measure in $\Omega$ and note that separability of the normed space $X$ passes down to the Banach function space $X_A$. 

(4)$\Rightarrow$(1) Since $\mu$ is $\sigma$-finite, there exists an increasing sequence $(A_n)_{n\in\mathbb N}$ of sets of finite positive measure such that $\bigcup_{n=1}^\infty A_n=\Omega$. Since each space $X_{A_n}$ is separable by \Cref{finite measure separability X}, a simple topological argument yields separability of $Y:=\bigcup_{n=1}^\infty X_{A_n}$.

We claim that $Y$ is norm dense in $X$. To this end, it suffices to prove that $Y^+$ is norm dense in $X^+$. Pick a function $f\in X^+$. Then the sequence $(f\chi_{A_n})_{n\in\mathbb N}$ is increasing pointwise to $f$, so that $f_n\to f$ in order. Since $X$ is order continuous, we conclude that $f_n\to f$ in norm which proves the claim. Separability of $X$ follows now from separability of $Y$ and its density in $X$.

(1)$\Leftrightarrow$(5)
If $X$ is norm separable, then (1)$\Rightarrow$(3) implies that the measure $\mu$ is $\sigma$-finite, so that by
\cite[Corollary 5.22]{Abramovich:02} the function space $X$ contains a function $f$ which is strictly positive almost everywhere on $\Omega$. Such a function is a weak unit, so that from order continuity of the norm we conclude that $f$ is a quasi-interior point in $X$. Separability is inherited to $(C_f,\rho)$.

On the other hand, if $X$ admits a function $f$ which is strictly positive almost everywhere on $\Omega$, then $f$ is a quasi-interior point in $X$. To finish the proof we apply \Cref{separability in terms of components}.
\end{proof}

Suppose that $\mu$ is $\sigma$-finite. Then for each ideal $X$ in $L^0(\mu)$ by \cite[Theorem 1.92]{Abramovich:02} there exists the smallest measurable subset $C_X\subseteq\Omega$ with respect to $\mu$-almost everywhere set inclusion  such that every $f\in X$ vanishes $\mu$-almost everywhere on $\Omega\setminus C_X$. The set $C_X$ is called the \term{carrier} (or the \term{support}) of the ideal $X$. By \cite[Lemma 1.94]{Abramovich:02}, the function space $X$ is always order dense in $L^0(C_X)$. This immediately yields that a function space is order dense in $L^0(\mu)$ if and only if its carrier is $\Omega$.  If $X$ is a separable Banach function space, then $X$ has a carrier $C_X$. Indeed, if $X$ is separable, then $X$ contains a quasi-interior point $\varphi$.  
Let $A:=\{x\in \Omega:\; \varphi(x)>0\}$. Since for each $g\geq 0$ in $X$ we have 
$g\wedge n\varphi\to g$, the function $g$ is zero almost everywhere on $X\setminus A$ and so $g\in X_A$. This proves that $X=X_A$. Since $\varphi$ is strictly positive on $A$, we have $C_X=A$. For Banach function spaces over semi-finite measures their separability can be characterized as follows. 

\begin{thm}\label{separabilityBFSEXT}
For an order continuous Banach function space over a semi-finite measure the following assertions are equivalent. 
\begin{enumerate}
\item $X$ is separable.
\item $X$ is separable with respect to the topology of local convergence in measure and the carrier $C_X$ is $\sigma$-finite.
\end{enumerate}
\end{thm}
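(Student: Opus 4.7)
The overall strategy is to reduce both implications to \Cref{separabilityBFS} by transferring the problem from $(\Omega,\mathcal F,\mu)$ to the measure space $(C_X, \mathcal F|_{C_X}, \mu|_{C_X})$. On the carrier, viewing $X$ as a Banach function space over $(C_X,\mu|_{C_X})$, we have that $X$ is order dense in $L^0(\mu|_{C_X})$ by \cite[Lemma 1.94]{Abramovich:02}, so the hypotheses of \Cref{separabilityBFS} are satisfied provided $\mu|_{C_X}$ is semi-finite.

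For the implication (1)$\Rightarrow$(2), the first step is to locate the carrier. Since $X$ is norm separable, \Cref{sep norm un} gives that $(X,\taun)$ is separable, and then \Cref{splosni}(2) provides a quasi-interior point $\varphi\in X^+$. Setting $A:=\{x\in\Omega:\;\varphi(x)>0\}$, the argument already recalled just before the theorem shows that $X=X_A$ and $C_X=A$. The measure $\mu|_A$ is again semi-finite, so \Cref{separabilityBFS} applies to the order dense order continuous Banach function space $X\subseteq L^0(\mu|_A)$. The equivalence (1)$\Leftrightarrow$(3) of \Cref{separabilityBFS} forces $\mu|_A$, and hence $C_X$, to be $\sigma$-finite, while (1)$\Leftrightarrow$(2) yields separability of $X$ with respect to the topology of local convergence in measure on $A$. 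The final step is to transfer this last property from $A$ back to $\Omega$: since every $f\in X$ vanishes outside $C_X$, the identity $f\chi_B=f\chi_{B\cap C_X}$ holds for each $B\in\mathcal F$ with $\mu(B)<\infty$, so the topologies of local convergence in measure induced on $X$ from $L^0(\mu)$ and from $L^0(\mu|_{C_X})$ coincide, and separability is preserved.

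For the converse (2)$\Rightarrow$(1), the same identification is used in reverse. Assume that $C_X$ is $\sigma$-finite and that $X$ is separable with respect to the topology of local convergence in measure on $L^0(\mu)$. The observation above shows that $X$ is also separable with respect to the topology of local convergence in measure inside $L^0(\mu|_{C_X})$. Since $\mu|_{C_X}$ is $\sigma$-finite, hence semi-finite, and $X$ is order dense in $L^0(\mu|_{C_X})$, the equivalence (1)$\Leftrightarrow$(2) of \Cref{separabilityBFS} immediately gives norm separability of $X$.

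The main technical point, and the only substantive work in the proof, is the bookkeeping around the passage to the carrier: verifying that $X$ really does become an order dense order continuous Banach function space over $(C_X,\mu|_{C_X})$, and that the topology of local convergence in measure on $X$ is insensitive to whether $X$ is regarded inside $L^0(\mu)$ or inside $L^0(\mu|_{C_X})$. Once these two reductions are in place, both directions follow from \Cref{separabilityBFS} with essentially no further computation.
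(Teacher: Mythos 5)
Your proposal is correct and follows essentially the same route as the paper: both proofs locate the carrier via a quasi-interior point, identify $X$ with $X_{C_X}$ as an order dense order continuous Banach function space over $(C_X,\mu|_{C_X})$, and then invoke \Cref{separabilityBFS} in both directions. Your explicit verification that the topology of local convergence in measure on $X$ is the same whether computed in $L^0(\mu)$ or in $L^0(\mu|_{C_X})$ is a welcome bit of extra care that the paper only gestures at.
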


\begin{proof}
(1)$\Rightarrow$(2)
Since $X$ is separable, it has a quasi-interior point $\varphi$. By the discussion preceding the theorem, the set $A:=\{x\in \Omega:\; \varphi(x)>0\}$ is the carrier $C_X$  of $X$. We claim that $X$ is order dense in $L^0(\mu|_A)$. To see this, pick any function $0<f\in L^0(\mu|_{A})$. Since $f$ is non-zero, there exists $n\in \mathbb N$ such that the set $B:=\{x\in A:\; f(x)\geq \frac 1n\}$ has a positive measure. Then $\chi_B\leq nf$. By the definition of a carrier there exists a non-zero function $g\in X^+$ such that  $g\wedge \chi_B\neq 0$. Hence, $0\leq \frac{1}{n}(g\wedge \chi_B)\leq f$, so that $X_{A}$ is order dense in $L^0(\mu|_{A})$. Since $X=X_{A}$ is norm separable and $\mu|_{A}$ is semi-finite, by \Cref{separabilityBFS} we conclude that the measure $\mu|_A$ is $\sigma$-finite and that $X=X_{A}$ is separable with respect to the topology of local convergence in measure on $A$. We conclude the proof of (2) by observing that  each function in $X$ is zero almost everywhere on $\Omega\setminus A$.  

(2)$\Rightarrow$(1) The same argument as in the proof of the previous implication shows that $X=X_{A}$ is order dense in $L^0(\mu|_A)$, Hence, by \Cref{separabilityBFS} we conclude that $X=X_{A}$ is norm separable. 
\end{proof}

\Cref{separabilityBFS} and \Cref{separabilityBFSEXT} should be compared with \cite[Theorem 46.4 and Theorem 46.5]{Lux14b}.

The one-dimensional Banach function space $L^\infty(\mu)$ over a singleton set with infinite measure shows that neither  \Cref{separabilityBFS} nor \Cref{separabilityBFSEXT} do not hold whenever the underlying measure space is not semi-finite.  

The equivalence between (1) and (3) in \Cref{separabilityBFS} directly shows that over a semi-finite measure $\mu$ the space $L^p(\mu)$ is separable if and only if the $L^q(\mu)$ is separable for $1\leq p,q<\infty$. The following corollary shows that this is also the case for general measure spaces. 

\begin{cor}\label{separabilnost L^p}
For every measure space $(\Omega,\mathcal F,\mu)$ and $1\leq p,q<\infty$ the Banach lattice $L^p(\mu)$ is separable if and only if $L^q(\mu)$ is separable. 
\end{cor}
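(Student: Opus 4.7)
The plan is to reduce to the $\sigma$-finite case and then invoke the equivalence $(1)\Leftrightarrow(3)$ of \Cref{separabilityBFS}, whose condition~(3) characterizes separability by a purely measure-theoretic statement that makes no reference to the exponent. By symmetry in $p$ and $q$, I need only prove that separability of $L^p(\mu)$ implies separability of $L^q(\mu)$.

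First I would fix a countable norm-dense set $\{f_n:n\in\mathbb N\}$ in $L^p(\mu)$. Chebyshev's inequality gives $\mu(\{|f_n|\geq 1/k\})\leq k^p\|f_n\|_p^p<\infty$ for all $n,k$, so each support $\{|f_n|>0\}=\bigcup_k\{|f_n|\geq 1/k\}$ is $\sigma$-finite, and therefore so is $A:=\bigcup_n\{|f_n|>0\}$.

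The main technical step, which I expect to be the core obstacle, is to show that $A$ is \emph{locally conull}: $\mu(B\setminus A)=0$ for every $B\in\mathcal F$ with $\mu(B)<\infty$. Otherwise $\chi_{B\setminus A}$ would be a non-zero element of $L^p(\mu)$ disjoint from every $f_n$, forcing
$$\|\chi_{B\setminus A}-f_n\|_p^p=\|f_n\|_p^p+\mu(B\setminus A)\geq \mu(B\setminus A)>0$$
for all $n$, contradicting density.

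With local conullity established, every $g\in L^q(\mu)$ has support contained in $A$ up to a null set, because $\{|g|>0\}$ is a countable union of finite-measure sets and each such set lies in $A$ mod null. Hence the natural identifications $L^p(\mu)=L^p(\mu|_A)$ and $L^q(\mu)=L^q(\mu|_A)$ hold, and $\mu|_A$ is $\sigma$-finite (hence semi-finite). Since both sides are order continuous Banach function spaces that are order dense in $L^0(\mu|_A)$, I would conclude by applying \Cref{separabilityBFS} to $L^p(\mu|_A)$ to extract the $p$-free condition~(3) on $\mathcal F|_A$, and then reapplying \Cref{separabilityBFS} in the reverse direction to $L^q(\mu|_A)$ to obtain separability of $L^q(\mu)=L^q(\mu|_A)$.
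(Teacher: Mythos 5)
Your proof is correct and follows essentially the same route as the paper: reduce both spaces to a $\sigma$-finite carrier $A$ and pivot through the exponent-free condition (3) of \Cref{separabilityBFS}. The only (cosmetic) difference is that the paper produces $A$ as the support of a quasi-interior point $f$ of $L^p(\mu)$ and uses $f^{p/q}$ as a quasi-interior point of $L^q(\mu)$, whereas you build $A$ from a countable dense set and justify $L^q(\mu)=L^q(\mu|_A)$ via your local-conullity argument, which is if anything slightly more explicit.
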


\begin{proof}
Suppose that $L^p(\mu)$ is separable. Then $L^p(\mu)$ contains a quasi-interior point $f$. As before, let us denote the set $\{x\in \Omega:\; f(x)>0\}$ by $A$. Since $f$ is a quasi-interior point in $L^p(\mu)$, we have $L^p(\mu)=L^p(\mu|_A)$, and since the function $f^{\frac pq}$ is a quasi-interior point in $L^q(\mu)$ we have  $L^q(\mu)=L^q(\mu|_A)$. To finish the proof note that \Cref{separabilityBFS} yields that $L^p(\mu|_A)$ (resp., $L^q(\mu|_A)$) is separable if and only if the topological space $(B,d_{\mu|_B})$ is separable for each subset $B$ of $A$ of finite measure. 
\end{proof}

So far the results in this section provided a connection between separability of a given Banach function space and separability of the underlying measure space. In the following proposition we establish a suitable topological version for $L^1$-spaces.

\begin{prop}\label{Prop5.5}
Let $\Omega$ be a second countable topological space.
If $(\Omega,\mathcal F,\mu)$ is a measure space with an outer regular Borel measure,
then $L^p(\mu)$ is separable for each $1\leq p<\infty$.
\end{prop}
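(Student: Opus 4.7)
My plan is to apply Theorem \ref{separabilityBFS} to $X=L^p(\mu)$ viewed as a Banach function space over its carrier, since $L^p(\mu)$ is order continuous for every $1\le p<\infty$. It therefore suffices to establish two facts: (a) the carrier $C_X$ is $\sigma$-finite, and (b) for every $A\in\mathcal F$ with $\mu(A)<\infty$ the semi-metric space $(\mathcal F|_A,d_{\mu|_A})$ is separable. Both facts are extracted from one construction combining a countable base of $\Omega$ with outer regularity of $\mu$.

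I first fix a countable base $\mathcal B=\{B_n:n\in\mathbb N\}$ of $\Omega$ and single out the subcollection $\mathcal B_{\mathrm{fin}}:=\{B\in\mathcal B:\mu(B)<\infty\}$. The key observation is that every open set $V$ with $\mu(V)<\infty$ admits an expansion $V=\bigcup_k B_{n_k}$ in $\mathcal B$, and each $B_{n_k}\subseteq V$ automatically lies in $\mathcal B_{\mathrm{fin}}$. By outer regularity, any measurable set $E$ of finite measure sits, modulo a null set, inside some such $V$, and consequently inside the countable union $U:=\bigcup_{B\in\mathcal B_{\mathrm{fin}}}B$. Since every $f\in L^p(\mu)$ has sublevel sets $\{|f|\ge 1/n\}$ of finite measure, the support of $f$ lies in $U$ up to a null set; a short argument using $\chi_{B\setminus C}\in L^p(\mu)$ for each $B\in\mathcal B_{\mathrm{fin}}$ then shows that $U$ is the smallest such set, so $C_X=U$ (as equivalence classes), and in particular $L^p(\mu)=L^p(\mu|_U)$. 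This settles (a).

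For (b), given $A$ with $\mu(A)<\infty$, $E\in\mathcal F|_A$ and $\varepsilon>0$, I use outer regularity to find an open $V\supseteq E$ with $\mu(V\setminus E)<\varepsilon/2$ and $\mu(V)<\infty$, expand $V$ as $\bigcup_k B_{n_k}$ with $B_{n_k}\in\mathcal B_{\mathrm{fin}}$, and truncate to a finite subunion $F$ with $\mu(V\setminus F)<\varepsilon/2$. Then
$$d_\mu(E,F\cap A)\le \mu(V\setminus E)+\mu(V\setminus F)<\varepsilon.$$
Hence the countable family of all finite unions of members of $\mathcal B_{\mathrm{fin}}$, intersected with $A$, is $d_{\mu|_A}$-dense in $\mathcal F|_A$. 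Combining (a) and (b), condition (3) of Theorem \ref{separabilityBFS} holds for the order dense order continuous Banach function space $L^p(\mu|_U)$ over the semi-finite (indeed $\sigma$-finite) measure $\mu|_U$, so that theorem yields norm separability of $L^p(\mu|_U)=L^p(\mu)$; Corollary \ref{separabilnost L^p} covers all $p\in[1,\infty)$ uniformly.

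The main obstacle is resisting the temptation to assume $\mu$ itself is $\sigma$-finite or even semi-finite on all of $\Omega$: outer regularity does not force this (consider degenerate measures assigning $\infty$ to every non-empty open set), so the approximation must be carried out inside the carrier $U$ rather than in $\Omega$. Once the identification $L^p(\mu)=L^p(\mu|_U)$ is in place, both halves of the plan follow from the single bridge between the two hypotheses---namely, that open sets of finite measure are countable unions of basic open sets of finite measure.
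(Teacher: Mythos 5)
Your proof is correct, but it takes a genuinely different route from the paper's. The paper reduces to $p=1$ via Corollary~\ref{separabilnost L^p} and then gives a direct, self-contained density argument: it approximates $f\in L^1(\mu)$ by a rational step function $\sum q_k\chi_{A_k}$, uses outer regularity to replace each $A_k$ (of necessarily finite measure) by a finite union $W_k$ of basic open sets of finite measure with $\|\chi_{A_k}-\chi_{W_k}\|_1$ small, and concludes that the countable set of rational combinations of such finite unions is dense. You instead route the argument through Theorem~\ref{separabilityBFS}: you identify the carrier of $L^p(\mu)$ as $U=\bigcup\{B\in\mathcal B:\mu(B)<\infty\}$ (so that $L^p(\mu)=L^p(\mu|_U)$ with $\mu|_U$ $\sigma$-finite) and verify separability of the measure semi-metric on each finite-measure set --- all checks go through, including the edge case $U=\emptyset$, where $L^p(\mu)=\{0\}$. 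Both proofs hinge on the same bridge between the hypotheses, namely that an open set of finite measure is a countable union of basic open sets of finite measure, so that outer regularity lets finite unions of such basic sets approximate arbitrary finite-measure sets. What your approach buys is a clean illustration of the abstract characterization in Theorem~\ref{separabilityBFS} and an explicit description of the carrier; what the paper's approach buys is economy --- it never needs to locate the carrier or invoke the Banach-function-space machinery, only the elementary $L^1$ approximation plus Corollary~\ref{separabilnost L^p} to pass to general $p$ (a step you could equally well skip, since your argument works verbatim for each $p$).
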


\begin{proof}
By \Cref{separabilnost L^p} it is enough to prove that $L^1(\mu)$ is separable. If there are no sets of positive finite measure in $\Omega$, then  $L^p(\mu)=\{0\}$. Otherwise, let $\mathcal W$ be a countable basis for $X$ and let $\mathcal W'$ be the family of all finite unions of elements of $\mathcal W$ of finite measure.  We will show that the non-empty countable set 
$$
{\mathcal S}=\bigg\{\sum_{W\in K} q_W\chi_W : K\subset{\mathcal W}'\text{ is a finite set and } q_W\in\QQ\bigg\}
$$ 
is dense in $L^1(\mu)$.
Let $f\in L^1(\mu)$ and $\varepsilon>0$. Then there exists a step function $\sum_{k=1}^n q_k\chi_{A_k}\in L^1(\mu)$ such that $\|f-\sum_{k=1}^n q_k\chi_{A_k}\|_1<\tfrac\varepsilon 2$ where $q_k\in\QQ$.
Since $\mu$ is outer regular for every $k$ there exists an open set $V_k\supseteq A_k$ such that $\mu(V_k\setminus A_k)<\tfrac{\varepsilon}{2^{k+2} \,\max\{1,|q_k|\}}$. Since $V_k$ is a (countable) union of elements from $\mathcal W$, there exists $W_k\in{\mathcal W}'$, such that $W_k\subseteq V_k$ and $\mu(V_k\setminus W_k)<\tfrac{\varepsilon}{2^{k+2} \,\max\{1,|q_k|\}}$. Then 
\begin{align*}
    \|\chi_{A_k}-\chi_{W_k}\|_1=\mu(A_k\setminus W_k)+\mu(W_k\setminus A_k)\le \mu(V_k\setminus W_k)+\mu(V_k\setminus A_k)<\tfrac{\varepsilon}{2^{k+1} \,\max\{1,|q_k|\}},
\end{align*}
and therefore 
\[
\bigg{\|}f-\sum_{k=1}^n q_k\chi_{A_k}\bigg{\|}_1\le \bigg{\|}f-\sum_{k=1}^n q_k\chi_{W_k}\bigg{\|}_1+\bigg{\|}\sum_{k=1}^n q_k\chi_{W_k}-\sum_{k=1}^n q_k\chi_{A_k}\bigg{\|}_1<\varepsilon. \qedhere
\]
\end{proof}

The following example shows that there exist a topological space that is not second countable, yet the corresponding $L^1$-space is separable.

\begin{ex}\label{Sorgenfrey}
Consider the S\"orgenfrey line $\RR_S$, i.e., the real line $\RR$ equipped with the topology defined by the basis $\{[a,b) : a,b\in\RR\}$. It is well known that $\RR_S$ is separable but not second countable. The corresponding Borel $\sigma$-algebra $\mathcal B_S$ is the standard Borel $\sigma$-algebra on $\RR$, and hence, $L^1(\RR_S,\mathcal B_S,m)$ is separable.
\end{ex}

In comparison with the Sorgenfrey line one can also find a non-separable topological space $\Omega$ for which the function space $L^1(\mu)$ is still separable.

\begin{ex}
Let $\tau_E$ be the Euclidean topology on $\RR$ and
let 
\[
{\mathcal B}=\{U\setminus C : U\in\tau_E\text{ and }C\text{ is countable}\}
\]
be a basis of a topology $\tau$ on $\RR$. Let $\mathcal B_\tau$ and $\mathcal B_E$ be the Borel $\sigma$-algebras generated by $\tau$ and $\tau_E$, respectively. Because $\tau$ is stronger than $\tau_E$, we have $\mathcal B_E\subseteq \mathcal B_\tau$.
On the other hand, every set $U\setminus C$ is in $\mathcal B_E$, so that ${\mathcal B}\subseteq \mathcal B_E$, and therefore, $\mathcal B_\tau\subseteq \mathcal B_E$. As in the previous example we have that $L^1(\RR,\mathcal B_\tau, m)$ is separable. However, the space $(\RR,\tau)$ is not separable, since for every countable set $Q$ we have $Q\cap (\RR\setminus Q)=\emptyset$ and $\RR\setminus Q\in\tau$.
\end{ex}

\section{The metric space of components}\label{Section: Metric space components}

In this section we are interested in finding conditions on the underlying lattice under which either $I_u=I_v$ or $B_u=B_v$ implies that the corresponding spaces of components $C_u$ and $C_v$ are homeomorphic or even isometric with respect to their natural metrics induced by the norm. First we recall the classical representation theory due to Kakutani.

Given a positive vector $x$ in a vector lattice $X$, consider the principal ideal $I_x$ generated by the vector $x$. It is known that
$$I_x=\{y\in X:\; |y|\leq \lambda x\textrm{ for some }\lambda\geq 0\}$$
and that the mapping $\|\cdot\|_x$ on $I_x$ defined as
$$\|y\|_x:=\inf\{\lambda>0:\; |y|\leq \lambda x\}$$ is a lattice seminorm on $I_x$. If $X$ is Archimedean, then $\|\cdot\|_x$ is a lattice norm on $I_x$ such that for every $y\in I_x$ we have $|y|\leq \|y\|_x\, x$.
If $X$ is uniformly complete, then $(I_x,\|\cdot\|_x)$ is a Banach lattice with a strong unit $x$ so that by an application of the classical Kakutani representation theorem of AM-spaces with strong units (see \cite[Theorem 4.29]{Aliprantis:06}) 
$(I_x,\|\cdot\|_x)$ is lattice isometric to a Banach lattice $C(K)$ for some compact Hausdorff space $K$.  Furthermore, one can require that the isomorphism maps $x$ to the constant function $\one_K$ on $K$.

We start by a simple lemma whose proof is provided only for the sake of completeness.

\begin{lemma}\label{isomorphism of principal ideals}
Let $u$ and $v$ be positive vectors of an Archimedean vector lattice $X$. If $I_u=I_v$, then the identity mapping  $I\colon (I_u,\|\cdot\|_u)\to (I_v,\|\cdot\|_v)$ is an isomorphism of normed lattices.
\end{lemma}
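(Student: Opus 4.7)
The plan is to exploit the defining property $I_u = I_v$ to bound $u$ and $v$ by scalar multiples of one another, and then to push this through to the respective order-unit norms.

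First I would observe that $u \in I_u = I_v$ and $v \in I_v = I_u$, so by the explicit description of principal ideals recalled just above the lemma, there exist scalars $\lambda, \mu \geq 0$ with $u \leq \mu v$ and $v \leq \lambda u$. Next, for any $y \in I_u = I_v$, the inequality $|y| \leq \|y\|_u \cdot u \leq \mu \|y\|_u \cdot v$ gives $\|y\|_v \leq \mu \|y\|_u$, and the symmetric argument yields $\|y\|_u \leq \lambda \|y\|_v$. Hence the identity map is bounded with bounded inverse, so it is a topological isomorphism between the normed spaces $(I_u, \|\cdot\|_u)$ and $(I_v, \|\cdot\|_v)$.

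Finally, since both $\|\cdot\|_u$ and $\|\cdot\|_v$ are lattice norms on the common underlying vector lattice $I_u = I_v$, the identity map automatically preserves the vector and lattice operations, so it qualifies as an isomorphism of normed lattices. There is no real obstacle here; the only point worth flagging is that one must invoke the Archimedean hypothesis implicitly via the fact (recorded in the paragraph preceding the lemma) that $\|\cdot\|_u$ and $\|\cdot\|_v$ are genuine norms rather than merely seminorms, so that ``isomorphism'' makes sense.
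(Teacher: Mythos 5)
Your proof is correct and follows essentially the same route as the paper's: it extracts a scalar $\mu$ with $u\leq \mu v$ from $I_u=I_v$, combines it with the inequality $|y|\leq \|y\|_u\,u$ to get $\|y\|_v\leq \mu\|y\|_u$, and argues symmetrically for the inverse. Your explicit remark about where the Archimedean hypothesis enters is a fair observation that the paper leaves implicit.
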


\begin{proof}
Since $I_u=I_v$, the identity mapping between $I\colon (I_u,\|\cdot\|_u)\to (I_v,\|\cdot\|_v)$ is clearly a lattice isomorphism. To prove that it is also an isomorphism of normed lattices, we first find $\lambda\geq 0$ such that $u\leq \lambda v$. Then for $x\in I_u$ the inequality $|x|\leq \|x\|_u u$ immediately yields that $|x|\leq \lambda \|x\|_u v$, from where it follows that $\|x\|_v\leq \lambda \|x\|_u$. This proves that $I\colon (I_u,\|\cdot\|_u)\to (I_v,\|\cdot\|_v)$  is continuous. Similarly one can prove that its inverse is continuous as well.
\end{proof}

Consider the vector lattice $C(K)$ of all real-valued continuous functions ordered pointwise on a non-empty compact Hausdorff space $K$. Pick non-negative functions $u,v\in C(K)$  such that  $I_u=I_v$. 
Then their zero sets $\mathcal Z_u=\{t\in K:\; u(t)=0\}$ and $\mathcal Z_v=\{t\in K:\; v(t)=0\}$ coincide. Since $I_u=I_v$, there exists $\lambda\geq 0$ such that $u\leq \lambda v$ from where it follows that for each $x\in I_u=I_v$ the function $Sx$ defined by  
$$(Sx)(t)=
\begin{cases}
\frac{v(t)}{u(t)}x(t) ,& t\notin \mathcal Z_u,\\
0, & t\in \mathcal Z_u,
\end{cases}$$
is continuous. This yields that $S\colon C_u\to C_v$ is an isomorphism of lattices which  in general is not isometric. Nevertheless, the Kakutani representation theorem allows us to modify $S$ in a way that it becomes also isometric. 

\begin{prop}\label{arch unif compl components}
Let $X$ be a uniformly complete Archimedean vector lattice. Suppose that for positive vectors $u$ and $v$ we have
$I_u=I_v$. Then there exists a surjective isometry between metric spaces $C_u$ and $C_v$.
\end{prop}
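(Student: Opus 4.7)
The plan is to exploit the Kakutani representation theorem for $AM$-spaces with strong units and reduce the assertion to a simple combinatorial statement about clopen subsets. Since $X$ is a uniformly complete Archimedean vector lattice, each of $(I_u,\|\cdot\|_u)$ and $(I_v,\|\cdot\|_v)$ is a Banach lattice with strong unit. By the Kakutani representation theorem (cf.~\cite[Theorem~4.29]{Aliprantis:06}) there exist compact Hausdorff spaces $K_u, K_v$ and surjective lattice isometries
\[
\varphi_u:(I_u,\|\cdot\|_u)\to C(K_u),\qquad \varphi_v:(I_v,\|\cdot\|_v)\to C(K_v),
\]
with $\varphi_u(u)=\one_{K_u}$ and $\varphi_v(v)=\one_{K_v}$. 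Under either representation the components of the strong unit correspond precisely to characteristic functions of clopen sets, so $\varphi_u$ and $\varphi_v$ set up canonical bijections $C_u\leftrightarrow\clop(K_u)$ and $C_v\leftrightarrow\clop(K_v)$.

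Next, I would match the two Boolean algebras of clopen sets. By \Cref{isomorphism of principal ideals} the identity $\mathrm{id}:(I_u,\|\cdot\|_u)\to(I_v,\|\cdot\|_v)$ is a topological Riesz isomorphism, so $T:=\varphi_v\circ\mathrm{id}\circ\varphi_u^{-1}:C(K_u)\to C(K_v)$ is a surjective Riesz isomorphism. By the standard classification of Riesz isomorphisms between spaces of continuous functions, there exist a homeomorphism $h:K_v\to K_u$ and a strictly positive $w\in C(K_v)$ such that $(Tf)(t)=w(t)\,f(h(t))$. In particular $h$ induces a bijection $E\mapsto h^{-1}(E)$ between $\clop(K_u)$ and $\clop(K_v)$.

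Composing the three bijections, I define
\[
\sigma:C_u\to C_v,\qquad \sigma\bigl(\varphi_u^{-1}(\chi_E)\bigr):=\varphi_v^{-1}\bigl(\chi_{h^{-1}(E)}\bigr),
\]
which is surjective by construction. To verify that $\sigma$ is an isometry of the metric spaces $(C_u,d_u)$ and $(C_v,d_v)$, where $d_u(x,y):=\|x-y\|_u$ and $d_v(x,y):=\|x-y\|_v$, I take $x,y\in C_u$ with $\varphi_u(x)=\chi_E$ and $\varphi_u(y)=\chi_F$. Then $|x-y|$ is itself a component of $u$ and corresponds under $\varphi_u$ to $\chi_{E\triangle F}$, so $\|x-y\|_u=\|\chi_{E\triangle F}\|_\infty\in\{0,1\}$, the value being $0$ exactly when $E=F$. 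The same calculation performed in $C(K_v)$ gives $\|\sigma(x)-\sigma(y)\|_v\in\{0,1\}$, the value being $0$ exactly when $h^{-1}(E)=h^{-1}(F)$, which is equivalent to $E=F$ by bijectivity of $h$. Hence the two distances coincide and $\sigma$ is the required surjective isometry.

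The main obstacle is producing the homeomorphism $h:K_v\to K_u$; this is the content of the classification of Riesz isomorphisms between $C$-spaces (equivalently, the fact that the Kakutani spectrum of an $AM$-space is determined up to homeomorphism by its abstract Banach-lattice structure). Once the identification $K_u\cong K_v$ is in hand, the remainder of the argument is a routine transport through the Kakutani isomorphisms, together with the observation that on the spaces of components both $\|\cdot\|_u$ and $\|\cdot\|_v$ restrict to discrete $\{0,1\}$-valued metrics.
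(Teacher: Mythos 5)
Your proof is correct and follows essentially the same route as the paper: reduce to $C(K)$-spaces via Kakutani, invoke the weighted-composition description of the lattice isomorphism $T$ (the paper assembles this from \cite[Theorem 2.34]{Aliprantis:06} together with \cite[Lemma 4.14]{Eisner}) to extract the homeomorphism $h$, and transport clopen sets. Your closing observation that $\|\cdot\|_u$ and $\|\cdot\|_v$ restrict to $\{0,1\}$-valued metrics on the component sets, so that any bijection between them is automatically an isometry, is a small but valid shortcut for the final verification, which the paper instead performs by checking that the normalized operator $Sf=f\circ\zeta$ is isometric on all of $C(K)$.
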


\begin{proof}
By \Cref{isomorphism of principal ideals}, $(I_u,\|\cdot\|_u)$ and $(I_v,\|\cdot\|_v)$ are isomorphic as Banach lattices. By  the Kakutani representation theorem there exist compact Hausdorff spaces $K$ and $L$ such that $(I_u,\|\cdot\|_u) $ and $(I_v,\|\cdot\|_v)$ are lattice isometric to $(C(K),\|\cdot\|_\infty)$ and $(C(L),\|\cdot\|_\infty)$, respectively. From this we conclude that $C(K)$ and $C(L)$ are isomorphic as Banach lattices. Let $T\colon C(K) \to C(L)$ be
a Banach lattice isomorphism between $C(K)$ and $C(L)$. By \cite[Theorem 2.34]{Aliprantis:06} there exists a unique non-negative function $g\in C(L)$ and a mapping $\zeta\colon L\to K$ which is continuous on the set $\{y\in L:\; g(y)>0\}$ satisfying
$$(Tf)(y)=g(y)f(\zeta(y)).$$ If $g(y)=0$ would hold for some $y\in L$, then $(Tf)(y)=0$ for all $f\in C(K)$ which would imply that $T$ is not surjective. Hence, $g$ is strictly positive, so that $\zeta$ is continuous on $L$. Since $g$ is strictly positive and since $T$ is a lattice isomorphism, the mapping
$S\colon C(K)\to C(L)$ defined as $(Sf)(y)=f(\zeta(y))$ is a lattice and algebra isomorphism which satisfies $S(\one_K)=\one_L$. An application of \cite[Lemma 4.14]{Eisner} yields that the mapping $\zeta$ is a homeomorphism, so that the mapping $S$ is isometric.

Hence, there exist a Banach lattice isometric isomorphism $\widetilde S\colon (I_u,\|\cdot\|_u) \to (I_v,\|\cdot\|_v)$ which maps $u$ to $v$.
To conclude the proof observe that its restriction $\widetilde S|_{C_u}$ is a surjective isometry between metric spaces $C_u$ and $C_v$.
\end{proof} 

\begin{quest}
Does \Cref{arch unif compl components} still hold if the assumption of uniform completeness is removed? 
\end{quest}

Since Banach lattices are Archimedean and uniformly complete, the following corollary immediately follows \Cref{arch unif compl components}.

\begin{cor}\label{Korolar6.3}
Let $u$ and $v$ be positive elements of a Banach lattice $X$. If $I_u=I_v$, then there exists a surjective isometry between metric spaces $C_u$ and $C_v$.
\end{cor}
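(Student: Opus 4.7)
The plan is to observe that the corollary is an immediate consequence of \Cref{arch unif compl components}: one simply needs to verify that any Banach lattice $X$ automatically satisfies the two hypotheses of that proposition, namely that $X$ is Archimedean and that $X$ is uniformly complete.

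First, I would note that every normed lattice (and therefore every Banach lattice) is Archimedean: if $nu \leq v$ holds for all $n \in \mathbb{N}$, where $u,v \in X^+$, then the lattice-norm property yields $n\|u\| \leq \|v\|$ for each $n$, forcing $\|u\|=0$ and hence $u = 0$. Second, I would verify uniform completeness. Suppose $(x_n)$ is a sequence in $X$, $w \in X^+$, and scalars $\varepsilon_{n,m} \to 0$ satisfy $|x_n - x_m| \leq \varepsilon_{n,m}\, w$. Then $\|x_n - x_m\| \leq \varepsilon_{n,m}\, \|w\|$, so $(x_n)$ is norm-Cauchy and, by completeness of $X$, converges in norm to some $x \in X$. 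Letting $m \to \infty$ in $|x_n - x_m| \leq \varepsilon_{n,m}\, w$ and invoking continuity of the lattice operations with respect to the norm gives $|x_n - x| \leq \bigl(\sup_{m \geq n} \varepsilon_{n,m}\bigr) w$, which is precisely uniform convergence of $(x_n)$ to $x$ relative to $w$.

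With both properties in hand, \Cref{arch unif compl components} applies directly to the vectors $u,v \in X^+$ with $I_u = I_v$, producing the desired surjective isometry between $C_u$ and $C_v$. There is essentially no mathematical obstacle here; the content of the corollary is entirely in reducing to the preceding proposition. The only point requiring care is the interpretation of the metrics on $C_u$ and $C_v$: these are the metrics induced by the principal-ideal norms $\|\cdot\|_u$ and $\|\cdot\|_v$ respectively, matching the conclusion of \Cref{arch unif compl components}, and need not coincide with the metric inherited from the ambient norm of $X$ unless $u$ (respectively $v$) happens to be a strong unit.
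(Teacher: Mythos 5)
Your proof is correct and follows exactly the paper's route: the corollary is deduced from \Cref{arch unif compl components} by noting that every Banach lattice is Archimedean and uniformly complete, which the paper states without proof and you verify in the standard way. Your closing remark about the metrics on $C_u$ and $C_v$ being those induced by $\|\cdot\|_u$ and $\|\cdot\|_v$ is also consistent with how the proposition is stated and proved.
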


The natural question whether the equality between principal ideals could be replaced by the equality between principal bands has unfortunately a negative answer even when the underlying Banach lattice is Dedekind complete as the following example shows.  

\begin{ex}\label{discontinuity of varphi}
Consider the Banach lattice $\ell^\infty$ of all bounded sequences. Since the vector $\one:=(1,1,\ldots)$ is a strong unit in $\ell^\infty$ and the vector $u=(1,\frac 12,\frac 13,\ldots)$ is a weak unit in $\ell^\infty$ we clearly have that $\ell^\infty=B_\one=B_u$. Since we  have
$$
C_u=\{u\chi_A:\;A\subseteq \mathbb N\}\quad\text{and}\quad C_\one=\{\chi_A:\;A\subseteq \mathbb N\}, 
$$
it follows that the mapping
$u\chi_A \mapsto \chi_A$ is a bijection between $C_u$ and $C_\one$. Since the metric space $C_\one$ is discrete, any mapping $\phi\colon C_\one \to C_u$ is continuous.

We claim that there is no continuous injective mapping $\phi\colon C_u\to C_\one$. 
To this end, suppose that there exists a continuous injective mapping $\phi\colon C_u\to C_\one$.
For each $n\in \mathbb N$ let us consider the component $u_n:=(1,\ldots,\frac 1n,0,\ldots)$ of $u$.
Since $u_n\to u$ in $\ell^\infty$, we would have $\phi(u_n)\to \phi(u)$. Since both $\phi(u_n)$ and $\phi(u)$ are distinct components of $\one$ in $\ell^\infty$, the sequence $\phi(u_n)-\phi(u)$ would need to have at least one non-zero term. Since the components of the vector $\one$ in $\ell^\infty$ are sequences consisting only of zeros and ones, we would conclude that for each $n\in \mathbb N$ we would have $\|\phi(u_n)-\phi(u)\|\geq 1$. This contradiction shows that $C_u$ and $C_\one$ are not homeomorphic.
\end{ex}

It turns out that in \Cref{Korolar6.3} we can replace principal ideals by principal bands in the case when the underlying Banach lattice is order continuous (see \Cref{homeomorphism between space of components}). We will prove this result by  an explicit construction of a homeomorphism between $C_u$ and $C_v$. The definition of the aforementioned homeomorphism will rely on a very easy yet useful description of components of a given vector in the case when the lattice has the principal projection property.

\begin{lemma}\label{opis komponent}
Let $u$ be a positive vector of a vector lattice $X$ with the principal projection property. Then
$$C_u=\{Pu:\; P \textrm{ is a principal band projection }\}.$$
\end{lemma}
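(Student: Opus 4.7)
The plan is to prove both inclusions of the claimed equality by a direct argument that invokes only the definitions and the principal projection property.

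For the inclusion $\{Pu : P \text{ is a principal band projection}\} \subseteq C_u$, I would fix a principal band projection $P$ and verify that $Pu$ meets both conditions defining a component of $u$. Positivity and the bound $0 \leq Pu \leq u$ are immediate from the fact that band projections are positive operators of norm at most one on the positive cone, more precisely $0 \leq P \leq \mathrm{Id}$ on $X^+$. For the disjointness condition, observe that $Pu$ lies in the projection band $B$ associated to $P$, while $u - Pu = (\mathrm{Id} - P)u$ lies in the disjoint complement $B^d$; hence $Pu \wedge (u - Pu) = 0$.

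For the reverse inclusion $C_u \subseteq \{Pu : P \text{ is a principal band projection}\}$, I would take an arbitrary component $x \in C_u$ and produce a principal band projection $P$ with $Pu = x$. The natural candidate is $P := P_{B_x}$, the projection onto the principal band $B_x$ generated by $x$, which exists by the principal projection property. The key observation is that the disjointness $x \wedge (u-x) = 0$ means $u - x \in \{x\}^d = B_x^d$. Consequently $P_{B_x}(u-x) = 0$, and since $x \in B_x$ gives $P_{B_x} x = x$, we conclude $P_{B_x} u = P_{B_x} x + P_{B_x}(u-x) = x$, as required.

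There is essentially no obstacle here: the argument is a direct unpacking of definitions once one recognizes that the principal band to use is the one generated by the component itself. The only subtlety worth flagging is that the principal projection property is needed precisely to guarantee that $B_x$ is indeed a projection band, so that $P_{B_x}$ is defined on all of $X$.
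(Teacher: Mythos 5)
Your proof is correct and follows essentially the same route as the paper: the forward inclusion is the observation that $Pu\perp(u-Pu)$ with $0\leq Pu\leq u$, and the reverse inclusion uses the band projection onto $B_x$ together with $u-x\in\{x\}^d=B_x^d$ to get $P_{B_x}u=x$. No issues.
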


\begin{proof}
Let us denote the right-hand side set appearing in the lemma by $S$. 
If $P$ is a principal band projection, then for every positive vector $u\in X$ we have $Pu\perp (u-Pu)$ and $u=Pu+(u-Pu)$. Hence,  $Pu$ is a component of $u$ which proves that $S\subseteq C_u$.

To prove that $C_u\subseteq S$, pick $x\in C_u$ and consider the principal band projection $P$ onto the principal band generated by $x$.
Since $x\perp (u-x)$, we have that $P(u-x)=0$, and since $Px=x$ we conclude that $Pu=P(u-x)+Px=x$.
\end{proof}

Now it should be quite obvious that the required homeomorphism $\varphi\colon C_u \to C_v$  should be defined as $\varphi\colon Pu \mapsto Pv$ for all principal band projections $P$.

\begin{lemma}\label{varphi bijection}
Let $X$ be a vector lattice with the principal projection property. Suppose that
$B_u=B_v$ for some positive vectors $u,v\in X$.
\begin{enumerate}
\item Then the mapping $\varphi\colon C_u\to C_v$ is a well-defined bijection.
\item If  $X$ is a normed lattice and $u\in I_v$, then $\varphi\colon C_v\to C_u$ is a continuous bijection.
\end{enumerate}
\end{lemma}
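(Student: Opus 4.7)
The plan is to handle (1) by a cancellation argument with commuting band projections together with $B_u=B_v$, and then (2) by a Lipschitz estimate based on $u\leq \lambda v$. Throughout, the principal projection property guarantees that every principal band is a projection band and that principal band projections commute; by \Cref{opis komponent}, every element of $C_u$ (respectively $C_v$) has the form $Pu$ (respectively $Pv$) for some principal band projection $P$.

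For well-definedness of $\varphi\colon Pu\mapsto Pv$, I would suppose $P_1u=P_2u$ and set $P':=P_1(I-P_2)$, which is a principal band projection onto $B_{P_1}\cap B_{P_2}^d$. Since $P_1u=P_2u=P_1P_2u$, one gets $P'u=P_1u-P_1P_2u=0$, so $u\in \ker P'=B_{P'}^d$, i.e., $B_{P'}\subseteq B_u^d$. The hypothesis $B_u=B_v$ then gives $B_{P'}\subseteq B_v^d$, hence $P'v=0$ and $P_1v=P_1P_2v$. By the symmetric argument $P_2v=P_1P_2v$, so $P_1v=P_2v$. Surjectivity of $\varphi$ is immediate from \Cref{opis komponent}, and injectivity follows from exactly the same cancellation argument with the roles of $u$ and $v$ swapped.

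For (2), I would fix $\lambda\geq 0$ with $u\leq\lambda v$ and, for arbitrary principal band projections $P$ and $Q$, use the identity
\[
Pu-Qu = P(I-Q)u - (I-P)Qu,
\]
noting that the two positive vectors $P(I-Q)u$ and $(I-P)Qu$ lie in the mutually disjoint bands $B_P\cap B_Q^d$ and $B_P^d\cap B_Q$. Therefore
\[
|Pu-Qu| = P(I-Q)u + (I-P)Qu \leq \lambda\bigl(P(I-Q)v+(I-P)Qv\bigr)=\lambda\,|Pv-Qv|,
\]
and so $\|Pu-Qu\|\leq \lambda\,\|Pv-Qv\|$. This shows that the map $C_v\to C_u$, $Qv\mapsto Qu$, is Lipschitz with constant $\lambda$; bijectivity is already supplied by (1) applied to the pair $(v,u)$.

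The main obstacle is the well-definedness step: the naive attempt to deduce $P_1v=P_2v$ directly from $P_1u=P_2u$ fails because $P_1-P_2$ is not itself a band projection. The trick is to replace it with the genuine band projection $P_1(I-P_2)$ (and its symmetric counterpart), whose vanishing on $u$ is transferred to $v$ by passing through disjoint complements and using $B_u=B_v$. Once this is in place, both bijectivity and the Lipschitz bound are essentially formal.
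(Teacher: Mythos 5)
Your proof is correct. Part (2) is essentially the paper's argument: the Lipschitz bound $\|Pu-Qu\|\leq\lambda\|Pv-Qv\|$ coming from $u\leq\lambda v$; you merely spell out the disjointness identity $|Pu-Qu|=P(I-Q)u+(I-P)Qu$ that justifies the modulus inequality the paper states without comment. The well-definedness step in part (1), however, is handled by a genuinely different route. The paper first reduces to the case where $u$ and $v$ are weak units of the band $B_u=B_v$ (which inherits the principal projection property), and then argues that $Pu=Qu$ forces $(P-Q)^2u=0$, where $(P-Q)^2=P(I-Q)+Q(I-P)$ is a positive order continuous operator; since $u$ is a weak unit, this operator vanishes identically, whence $P=PQ=QP=Q$. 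You avoid both the reduction and the appeal to order continuity: from $P_1u=P_2u$ you pass to the band projection $P'=P_1(I-P_2)$ (a product of commuting band projections, hence a band projection), observe $P'u=0$, translate this into $\Ran P'\subseteq B_u^d=B_v^d$, and conclude $P'v=0$ directly; the symmetric step and commutativity then give $P_1v=P_2v$. This is more elementary and delivers well-definedness of $\varphi$ and of its inverse in one stroke, at the cost of not obtaining the paper's slightly stronger byproduct that, for a weak unit $u$, $Pu=Qu$ already forces $P=Q$ as operators. Your treatment of surjectivity via \Cref{opis komponent} and of injectivity by symmetry is exactly as in the paper.
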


\begin{proof}
It suffices to prove the statement only for the special case when $u$ and $v$ are weak units of $X$. Indeed, suppose that the statement holds for the case of weak units and consider the principal band $X_0:=B_u=B_v$ of $X$. By definition, $u$ and $v$ are weak units of $X_0$, and since $X_0$ is also an ideal in $X$, by \cite[Theorem 25.2]{Luxemburg:71} the vector lattice $X_0$ has the principal projection property. Since $C_u$ and $C_v$ are subsets of $X_0$, the general result follows from the special case.

(1) We need to check first that $\varphi$ is well-defined. Since by \Cref{opis komponent} $Pv\in C_v$ for each principal band projection $P$, it remains to prove that
$Pu=Qu$ yields $Pv=Qv$. Assume, therefore, that $Pu=Qu$ for some principal band projections $P$ and $Q$.
Since $Pu=Qu$ trivially implies $(P-Q)^2u=0$ and since
$$(P-Q)^2=P(I-Q)+Q(I-P)\geq 0$$ is an order continuous operator, the fact that $u$ is a weak unit in $X$ yields that $(P-Q)^2=0$. On the other hand, from positivity of operators $P(I-Q)$ and $Q(I-P)$ and from the identity $P(I-Q)+Q(I-P)=(P-Q)^2=0$ we conclude that $P=PQ$ and $Q=QP$. Since band projections always commute, we finally obtain that $P=Q$ which proves that $\varphi$ is well-defined.

The same argument shows that the mapping $\psi\colon C_v\to C_u$ defined by $\psi(Pv)=Pu$ is well-defined which is clearly the inverse of $\varphi$. 

(2) Suppose that a sequence $(P_nv)_{n\in\mathbb N}$ converges to some component $Pv$ in $C_v$. Since $u\in I_v$, there exists $\lambda\geq 0$ such that $u\leq \lambda v$. Hence, from the inequality
$$|P_nu-Pu|\leq \lambda |P_nv-Pv|$$ we conclude that
$P_nu\to Pu$, so that $\varphi$ is continuous.
\end{proof}

The following corollary should be compared with \Cref{arch unif compl components} where an existence of a surjective isometry between $C_u$ and $C_v$ is proved for uniformly complete Archimedean vector lattices. If we replace the assumption on uniform completeness with the principal projection property,  the mapping $\varphi$ defined above is only a homeomorphism. Moreover, $\varphi$ is isometric if and only if $u=v$. This will follow from \Cref{varphi isometrija}.

\begin{cor} 
Let $X$ be a vector lattice with the principal projection property. If $I_u=I_v$ for some positive vectors $u$ and $v$, then the mapping $\varphi\colon C_u\to C_v$ is a homeomorphism.
\end{cor}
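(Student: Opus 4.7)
The plan is to derive this corollary directly from \Cref{varphi bijection} by exploiting the symmetry of the hypothesis $I_u = I_v$, which in particular gives simultaneously $u \in I_v$ and $v \in I_u$, as well as the band equality $B_u = B_v$ (since the principal band generated by a positive vector coincides with the band generated by its principal ideal).

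With these observations in hand, part (1) of \Cref{varphi bijection} immediately applies and yields that $\varphi\colon C_u\to C_v$, $Pu\mapsto Pv$, is a well-defined bijection whose inverse $\psi\colon C_v\to C_u$ is given by $Pv\mapsto Pu$. For the topological content I would then invoke part (2) of \Cref{varphi bijection} twice. Applied directly with the containment $u\in I_v$, it gives continuity of the mapping $C_v\to C_u$, i.e., continuity of $\psi=\varphi^{-1}$. Applied after interchanging the roles of $u$ and $v$ (using the containment $v\in I_u$), it gives continuity of the mapping $C_u\to C_v$, i.e., continuity of $\varphi$ itself. A continuous bijection between metric spaces with continuous inverse is a homeomorphism, which proves the claim.

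The only point worth flagging is that part (2) of \Cref{varphi bijection} is stated asymmetrically: a single ideal inclusion yields continuity in only one direction. The strengthened hypothesis $I_u=I_v$ (as opposed to the mere band equality $B_u=B_v$ used in \Cref{varphi bijection}(1)) is exactly what is needed to feed this asymmetric statement in both directions simultaneously. Beyond this simple observation there is no genuine obstacle to overcome, so the corollary is essentially a formal consequence of the preceding lemma.
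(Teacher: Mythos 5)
Your proposal is correct and is exactly the intended argument: the paper states this corollary without a written proof as an immediate consequence of the preceding lemma, and your reasoning ($I_u=I_v$ gives $B_u=B_v$ together with both containments $u\in I_v$ and $v\in I_u$, so part (2) of the lemma applies in each direction to give continuity of $\varphi$ and of $\varphi^{-1}$) supplies precisely the missing details. Your remark on the asymmetry of part (2) correctly identifies why the ideal equality, rather than the band equality, is the right hypothesis here.
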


If we replace the condition $I_u=I_v$ in the corollary above by a weaker condition $B_u=B_v$, in general, the mapping $\varphi$ may not be continuous even when $X$ is Dedekind complete as it was shown in \Cref{discontinuity of varphi}. Continuity of $\varphi$ follows automatically when the underlying Banach lattice is order continuous. Before we state and prove \Cref{homeomorphism between space of components} which is the main result of this section, we need the following result which is of independent interest.

\begin{prop}\label{konvergenca komponent}
Let $X$ be an Dedekind complete normed lattice which has the $\sigma$-Fatou property or is a Banach lattice.  Suppose that $(P_n)_{n\in\mathbb N}$ is a sequence of band projections such that $P_nu\to Pu$ for some band projection $P$ and some $u\in X^+$. Then there exists a subsequence $(P_{n_k})_{k\in\mathbb N}$ which converges in order to $P$ on $B_u$ and strongly on $\overline{I_u}$ . 
\end{prop}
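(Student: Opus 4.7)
The plan is to establish norm convergence on $\overline{I_u}$ and order convergence on $B_u$ by separate arguments; the former will hold for the full sequence $(P_n)$, whereas the latter requires passing to a subsequence. The starting observation is that in a Dedekind complete vector lattice all band projections commute, so that for any band projections $Q,R$ and any $v\in X$,
\[Q v - R v = Q(I-R) v - (I-Q) R v,\]
with the two summands lying in the disjoint bands $\Ran Q$ and $\Ran(I-Q)$. Hence their absolute values add and commute with band projections, yielding the key identity
\[|Q v - R v| = Q(I-R)|v| + (I-Q) R |v|.\]

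Specializing to $Q = P_n$ and $R = P$, for $v\in I_u$ with $|v|\leq \lambda u$ this gives $|P_n v - P v|\leq \lambda |P_n u - P u|$, and hence $\|P_n v - P v\|\leq \lambda\|P_n u - P u\|\to 0$. Since band projections are norm-contractive in a normed lattice, a standard density and triangle-inequality argument extends norm convergence to all of $\overline{I_u}$; no subsequence is needed for this part.

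For the order convergence on $B_u$, I would extract a subsequence $(n_k)$ with $\|z_{n_k}\|\leq 2^{-k}$, where $z_n := |P_n u - P u|$, and set $y_k := \sup_{j\geq k} z_{n_j}$. The supremum exists by Dedekind completeness because $0\leq z_{n_j}\leq u$. To see $y_k \downarrow 0$, one expresses $y_k$ as the upward limit over $N$ of $\sup_{k\leq j\leq N} z_{n_j}\leq \sum_{j=k}^N z_{n_j}$ and invokes the $\sigma$-Fatou property (assumed in the Fatou case, and available in the Banach case via absolute convergence of $\sum z_{n_j}$) to deduce
\[\|y_k\|\leq \sum_{j\geq k}\|z_{n_j}\|\leq 2^{1-k}\to 0.\]
Since the decreasing limit $y^* := \inf_k y_k$ satisfies $\|y^*\|\leq \|y_k\|\to 0$, we have $y^*=0$.

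For $v\in B_u^+$, the splitting $v=(v\wedge mu) + (v-mu)^+$ with $v\wedge mu\in I_u$ combined with the $I_u$-estimate gives
\[|P_{n_k} v - P v|\leq m\,z_{n_k} + 2(v-mu)^+\leq m\,y_k + 2(v-mu)^+\]
for every $m, k\geq 1$. Setting $g_k := \inf_{m\geq 1}\bigl(m y_k + 2(v-mu)^+\bigr)$ (which exists by Dedekind completeness), $g_k$ is decreasing in $k$, and for each fixed $m$ one has $\inf_k g_k\leq \inf_k\bigl(m y_k + 2(v-mu)^+\bigr) = 2(v-mu)^+$ because $m y_k\downarrow 0$. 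Taking the infimum over $m$ and using $(v-mu)^+\downarrow 0$ (which holds because $v\in B_u$ in the Archimedean Dedekind complete lattice $X$) gives $\inf_k g_k = 0$, whence $g_k\downarrow 0$ and $|P_{n_k} v - P v|\leq g_k$ yields the desired order convergence. Linearity extends this to all $v\in B_u$. The main obstacle is bridging from the norm control available on $I_u$ to an order bound on the larger band $B_u$: the argument requires both the fast-decay subsequence to manufacture a single $y_k\downarrow 0$ that controls all the errors and the $\inf_m$ construction of $g_k$ to simultaneously tame the two competing parameters $m$ (handling the tail $(v-mu)^+$) and $k$ (handling $m y_k$).
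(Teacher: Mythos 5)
Your proof is correct, and although it shares the overall skeleton of the paper's argument (a fast-decaying subsequence, Dedekind completeness to form suprema of tails, the $\sigma$-Fatou property or norm completeness to control their norms, and the approximation $v\wedge mu\uparrow v$ for $v\in B_u$), the technical devices are genuinely different. The paper first reduces to the case $P=0$ via $|P_n-P|=P_n(I-P)+P(I-P_n)$ and then works with the band projections $Q_k=\bigvee_{j\geq k}P_{n_j}$, deducing both conclusions from $Q_ku\to 0$ and $Q_k\downarrow$. Your key lemma is instead the pointwise disjointness identity $|Qv-Rv|=Q(I-R)|v|+(I-Q)R|v|$, which yields the domination $|P_nv-Pv|\leq\lambda|P_nu-Pu|$ on $I_u$ directly and hence strong convergence on $\overline{I_u}$ for the \emph{whole} sequence (a small bonus over the stated subsequence claim); on $B_u$ you replace the supremum of projections by the supremum of vectors $y_k=\sup_{j\geq k}|P_{n_j}u-Pu|$ together with the $\inf_m$ construction of $g_k$ to tame the two competing parameters, where the paper instead fixes $k_0$, lets $k\to\infty$, and then lets $m\to\infty$ using order continuity of $Q_{k_0}$. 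The one substantive difference in output is that the paper's $Q_k$ produces a single decreasing sequence of band projections dominating every $|P_{n_j}-P|$ with $j\geq k$ (order convergence in the operator sense), whereas your $g_k$ depends on $v$ and gives pointwise order convergence on $B_u$; both readings of ``converges in order on $B_u$'' are defensible, and either suffices for the way the proposition is used later (order continuity of the norm turns either bound into norm convergence of $P_{n_k}v$), but you should be aware of the distinction.
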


\begin{proof}
Let us first consider the case when $P=0$.
Then $P_nu\to 0$, so that there is a subsequence $(P_{n_k})_{k\in\mathbb N}$ such that for each $k\in \mathbb N$ we have
$\|P_{n_k}u\|\leq \frac{1}{k^2}$. For each $k\in \mathbb N$ we consider the band projection
$$Q_k:=\bigvee_{j=k}^\infty P_{n_j}.$$ It is clear that the sequence $(Q_k)_{k\in\mathbb N}$ is decreasing. 

We claim that $Q_ku\to 0$. Consider first the case when $X$ is a Banach lattice. Then for each $k$ the series $\sum_{j=k}^\infty P_{n_j}u$ converges in $X$ to some positive vector $u_k$ which satisfies
$$\|u_k\|\leq \sum_{j=k}^\infty \|P_{n_j}u\|\leq  \sum_{j=k}^\infty \frac{1}{j^2}\to 0$$ as $k\to\infty$.
Since for each $l\geq k$ we have
$\big(\bigvee_{j=k}^lP_{n_j}\big)u \leq \sum_{j=k}^lP_{n_j}u\leq u_k$, due to order closedness of the positive cone we conclude 
$$\bigg(\bigvee_{j=k}^\infty P_{n_j}\bigg)u \leq u_k.$$ 
Since $u_k\to 0$, we have  $Q_ku=\big(\bigvee_{j=k}^\infty P_{n_j}\big)u\to 0$ as $k\to\infty$.

Let us now consider the case that $X$ has the $\sigma$-Fatou property. Pick any  $\epsilon>0$. Since $\sum_{j=k}^\infty \|P_{n_j}u\|\leq \sum_{j=k}^\infty \frac{1}{k^2}\to 0$ as $k\to\infty$, there exists $k_0\in \mathbb N$ such that
$$\bigg\|\bigg(\bigvee_{j=k}^lP_{n_j}\bigg)u\bigg\| \leq \sum_{j=k}^l \|P_{n_j}u\| \leq \sum_{j=k}^\infty \|P_{n_j}u\|<\epsilon$$ for all $l\geq k\geq k_0$. Since $\big(\bigvee_{j=k}^lP_{n_j}\big)u \uparrow Q_ku$ as $l\to \infty$ and since $X$ has the $\sigma$-Fatou property, for each $k\geq k_0$ we obtain $\|Q_ku\|\leq \epsilon$. This proves that $Q_ku\to 0$ also in the second case.

To prove that $P_{n_k}\to 0$ strongly on $\overline{I_u}$, pick a positive vector $x\in \overline{I_u}$ and an $\epsilon>0$. Since $u$ is a quasi-interior in $\overline{I_u}$, one can find $n\in \mathbb N$ such that $\|x-x\wedge nu\|<\frac{\epsilon}{2}$. Since $Q_ku\to 0$, there exists $k_0\in \mathbb N$ such that for all $k\geq k_0$ we have $\|Q_ku\|<\frac{\epsilon}{2n}$.
Since for each $k\geq k_0$ we have
\begin{align*}
\|P_{n_k}x\|\leq \|Q_kx\|\leq \|Q_k(x-x\wedge nu)\|+\|Q_k(x\wedge nu)\|
\leq \tfrac{\epsilon}{2} + \|Q_k(nu)\|<\epsilon,
\end{align*}
we have $P_{n_k}x\to 0$.

Now we prove that $P_{n_k}\to 0$ in order on $B_u$. Since $0\leq P_{n_k}\leq Q_k$ on $B_u$, it suffices to prove that $Q_k\downarrow 0$ on $B_u$. As $B_u$ is Dedekind complete, by 
\cite[Theorem 1.19]{Aliprantis:06} it is enough to prove that 
$Q_kx\downarrow 0$ for each $x\in B_u^+$. Suppose that some vector $y\in B_u^+$ satisfies $0\leq y\leq Q_kx$ for each $k\in \mathbb N$ and observe that for a fixed positive integer $n$  and for all positive integers $k\geq k_0$ we have 
$$
0\leq y\leq Q_kx=Q_k(x-x\wedge nu)+Q_k(x\wedge nu)\leq Q_{k_0}(x-x\wedge nu)+nQ_ku.$$
Since $Q_ku\downarrow$ and $Q_ku\to 0$, we have that $Q_ku\downarrow 0$ and so by letting $k$ go to infinity, for each $n\in\mathbb N$ we obtain 
$$0\leq y\leq Q_{k_0}(x-x\wedge nu).$$ Since $u$ is a weak unit in $B_u$, we have $x\wedge nu\uparrow x$ as $n\to\infty$, so that order continuity of $Q_{k_0}$ yields $Q_{k_0}(x-x\wedge nu)\downarrow 0$. This proves that $y=0$ from where it follows  $Q_kx\downarrow 0$.

For the proof of the general case, let us denote by $R_n$ the operator $P_n-P$. Since
$R_n=P_n-P_nP+P_nP-P=P_n(I-P)-P(I-P_n)$ and since band projections $P_n(I-P)=(I-P)P_n$ and $P(I-P_n)$ have disjoint ranges, they are disjoint in the vector lattice of all regular operators. This yields that
$|R_n|=P_n(I-P)+P(I-P_n)$ which implies that $|R_n|$ is a band projection. Since $R_nu\to 0$, by the special case above one can find a subsequence $(R_{n_k})_{k\in\mathbb N}$ which converges to $0$  strongly on $\overline{I_u}$ and in order on $B_u$.
\end{proof}

\Cref{konvergenca komponent} can be applied in two particular cases. If $u$ is a weak unit in $X$, then there is a subsequence $(P_{n_k})_{k\in\mathbb N}$ which converges in order to $P$ on $B_u=X$, and if $u$ is a quasi-interior point in $X$, then there is a subsequence $(P_{n_k})_{k\in\mathbb N}$ which converges in order and strongly to $P$ on $\overline{I_u}=X$.

\begin{thm}\label{homeomorphism between space of components}
Let $X$ be an order continuous Banach lattice. If two positive vectors $u$ and $v$ satisfy $B_u=B_v$ then $\varphi\colon C_u\to C_v$ is a homeomorphism of metric spaces.
\end{thm}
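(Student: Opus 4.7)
My plan is the following. First, I note that an order continuous Banach lattice is automatically Dedekind complete and hence possesses the principal projection property; moreover, it enjoys the $\sigma$-Fatou property, so both structural prerequisites of \Cref{konvergenca komponent} are in place. Since $B_u = B_v$, \Cref{varphi bijection}(1) already guarantees that $\varphi \colon C_u \to C_v$ is a well-defined bijection. Because $C_u$ and $C_v$ carry the subspace metric inherited from the norm of $X$, it suffices to establish sequential continuity of $\varphi$ and $\varphi^{-1}$.

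To show $\varphi$ is continuous, I would take an arbitrary sequence $(P_n)$ of principal band projections with $P_n u \to Pu$ in norm and aim to deduce $P_n v \to Pv$ in norm. The key device is the metric-space subsequence criterion: a sequence converges to a given limit provided every subsequence admits a further subsequence doing so. Given any subsequence, it still satisfies $P_{n_k} u \to Pu$, so \Cref{konvergenca komponent} produces a further subsequence along which $P_{n_{k_j}} \to P$ in order on $B_u$. Evaluating at $v \in B_v = B_u$ yields $P_{n_{k_j}} v \to Pv$ in order, and order continuity of the norm on $X$ upgrades this to norm convergence. The subsequence criterion then delivers $P_n v \to Pv$ in norm.

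Continuity of $\varphi^{-1}$ follows by the symmetric argument, after interchanging the roles of $u$ and $v$ in the step above. Combined with the bijectivity already supplied by \Cref{varphi bijection}(1), this completes the verification that $\varphi$ is a homeomorphism of metric spaces.

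I expect the main obstacle to be that, in contrast with the situation of \Cref{varphi bijection}(2), the assumption $B_u = B_v$ offers no pointwise domination of the form $|P_n v - Pv| \leq \lambda|P_n u - Pu|$, so a direct term-by-term proof of sequential continuity is unavailable. \Cref{konvergenca komponent} is precisely the tool that circumvents this obstruction at the cost of passing to a subsequence; order continuity converts the resulting order convergence back to norm convergence, and the subsequence criterion lifts the conclusion from the extracted subsequence back to the original sequence.
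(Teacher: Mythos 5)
Your proposal is correct and follows essentially the same route as the paper: bijectivity from \Cref{varphi bijection}(1), then extraction via \Cref{konvergenca komponent} of a subsequence converging in order to $P$ on $B_u$, evaluation at $v\in B_v=B_u$, and order continuity of the norm to upgrade to norm convergence, with the symmetric argument for $\varphi^{-1}$. The only cosmetic difference is that you phrase the final step via the subsequence-of-a-subsequence criterion while the paper argues by contradiction with a subsequence bounded away from the limit; these are equivalent.
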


\begin{proof}
We first consider the case when $u$ and $v$ are weak units of $X$.
It follows from \Cref{varphi bijection} that $\varphi\colon C_u\to C_v$ defined as $\varphi(Pu)= Pv$ is a bijection where $P$ runs over all principal band projections.

We claim that $\varphi$ is also continuous.
Suppose that a sequence $(x_n)_{n\in\mathbb N}$ of vectors in $C_u$ converges to a vector $x_0\in C_u$. For each $n\in \mathbb N_0$ there exists a principal band projection $P_n$ such that $x_n=P_nu$.
Suppose that the sequence $(P_nv)_{n\in\mathbb N}$ does not converge to zero. Hence, by passing to a subsequence (if necessary) there exists $\epsilon>0$ such that $\|P_{n}v-Pv\|\geq \epsilon$ for all $n\in \mathbb N$.
By \Cref{konvergenca komponent} there exists a subsequence $(P_{n_k})_{k\in \mathbb N}$ which converges in order to $P$. Again, by passing to a further subsequence (if necessary), we may also assume that $(P_n)_{n\in\mathbb N}$ converges to $P$ in order. Hence, there exists a sequence of positive operators $(A_n)_{n\in\mathbb N}$ such that $0\leq |P_n-P|\leq A_n\downarrow 0$.
Since $X$ is order continuous, from the  inequality
$$0\leq |P_nv-Pv|=|P_n-P|v\leq A_nv\to 0$$ we conclude that $P_nv\to Pv$ in norm which contradicts the fact that $\|P_nv-Pv\|\geq \epsilon$ for all $n\in \mathbb N$. Similarly, one can prove that the inverse mapping $\varphi^{-1}\colon Pv\to Pu$ is also continuous.

For the proof of the general case, consider the closed sublattice $F:=B_u=B_v$ and note that $F$ in its own right is an order continuous Banach lattice with quasi-interior points $u$ and $v$.
\end{proof}

The following proposition shows that the mapping $\varphi\colon C_u\to C_v$ is never an isometry unless $u=v$.

\begin{prop}\label{varphi isometrija}
Let $X$ be a normed lattice with the principal projection property. Suppose that for positive vectors $u$ and $v$ we have $B_u=B_v$. If the natural mapping
$\varphi\colon C_u\to C_v$ is an isometry, then $u=v$.
\end{prop}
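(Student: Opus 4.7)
The plan is to exploit the isometry hypothesis by specializing it to pairs of the form $(P, 0)$, which collapses the whole isometry statement to the single numerical identity $\|Pu\| = \|Pv\|$ valid for every principal band projection $P$ (using that $\varphi(0)=0$ and $\varphi(Pu)=Pv$ from \Cref{varphi bijection}). The target is then to prove $\alpha u \le v$ for every $\alpha \in (0, 1)$. This implies $u \le v$ by the Archimedean property, and a symmetric argument applied to the isometry $\varphi^{-1}\colon C_v\to C_u$ delivers $v \le u$, whence $u = v$.

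The main device is the choice of test projection. For each $\alpha \in (0, 1)$, let $F_\alpha$ denote the principal band projection onto the band generated by $(\alpha u - v)^+$. A direct calculation using that $F_\alpha$ kills $(\alpha u - v)^-$ gives $F_\alpha(\alpha u - v) = (\alpha u - v)^+ \ge 0$, which rearranges to the lattice inequality $0 \le F_\alpha v \le \alpha F_\alpha u$. Monotonicity of the lattice norm forces $\|F_\alpha v\| \le \alpha \|F_\alpha u\|$, and combining this with the isometry equality $\|F_\alpha u\| = \|F_\alpha v\|$ yields $(1-\alpha)\|F_\alpha u\| \le 0$, hence $F_\alpha u = 0$.

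To close the loop, the equality $F_\alpha u = 0$ says exactly that $u$ is disjoint from the generator $(\alpha u - v)^+$ of the range band of $F_\alpha$. Since $(\alpha u - v)^+ \le \alpha u$ belongs to $B_u$, and since $u$ is a weak unit of $B_u = B_v$, this disjointness forces $(\alpha u - v)^+ = 0$, i.e., $\alpha u \le v$. Letting $\alpha \uparrow 1$ and using that in any Archimedean vector lattice $\{\alpha u : 0 < \alpha < 1\}$ has supremum $u$, we obtain $u \le v$, and the symmetric argument then finishes the proof.

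The only conceptual point to be careful about is that the test projection $F_\alpha$ must lie in the class for which the isometry hypothesis is applicable. This is precisely why the principal projection property appears in the hypothesis: $F_\alpha$ is by construction the principal band projection determined by the single vector $(\alpha u - v)^+ \in X$, so the assumption gives us $F_\alpha u, F_\alpha v$ as genuine components and $\varphi(F_\alpha u) = F_\alpha v$. No uniform completeness, order continuity, or Dedekind completeness is required.
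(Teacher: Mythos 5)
Your proof is correct and takes essentially the same route as the paper's: you test the isometry (specialized at the pair $(Pu,0)$, giving $\|Pu\|=\|Pv\|$) against the band projection onto $B_{(\alpha u-v)^+}$ for $\alpha<1$ and let $\alpha\uparrow 1$, whereas the paper uses $B_{(u-\alpha v)^+}$ for $\alpha>1$ and lets $\alpha\downarrow 1$. The monotonicity-plus-isometry squeeze that kills the projected vector, and the concluding disjointness/weak-unit step, are the same in both arguments, so the difference is purely cosmetic.
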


\begin{proof}
As in the proof of \Cref{varphi bijection} and \Cref{homeomorphism between space of components} it is enough to prove the statement for the case when $u$ (and so also $v$) is a weak unit. 

Pick any $\alpha>1$ and consider the vector $w:=(u-\alpha v)^+$. Let $P_{w}$ be the band projection onto the principal band generated by the vector $w$.
Since
$$P_w(u-\alpha v)=P_w(u-\alpha v)^+-P_w(u-\alpha v)^-=P_w(u-\alpha v)^+\geq 0,$$ we conclude that
$P_wu\geq \alpha P_w(v)$. Since $\varphi$ is an isometry, we have 
$$\|P_wv\|=\|\varphi(P_wu)\|=\|P_wu\|\geq \alpha \|P_wv\|,$$ so that $P_wv=0$. Since $v$ is a weak unit and since $P_w$ is positive and order continuous, it follows that $P_w=0$.
This yields that $(u-\alpha v)^+=0$ from where it follows that $u\leq \alpha v$ for each $\alpha>1$. Letting $\alpha\downarrow 1$ we obtain $u\leq v$. Similarly one can show that $v\leq u$.
\end{proof}

\section{Some remarks on the normality of the unbounded topology}\label{Section: normal un}

We conclude this paper with this short section where we consider the question under which conditions a given normed lattice $X$ equipped with its unbounded norm topology is a normal topological space. Since metrizable spaces are normal, $(X,\taun)$ is certainly normal in this case. By \cite[Theorem 4.3]{KT18}, $\taun$ is metrizable if and only if $X$ contains an at most countable set  which generates a norm dense ideal in $X$. In particular, if $X$ is a Banach lattice with a quasi-interior point, then $(X,\taun)$ is metrizable. 

The following theorem whose proof is a combination of results from general topology, topological vector spaces and the unbounded norm topology provides a more general situation when $(X,\taun)$ is a normal space.
In order to do that we first recall the following notions. A Banach lattice $X$ is a \term{KB-space} whenever every increasing norm bounded sequence in $X^+$ is norm convergent.  
A Hausdorff topological space is \term{paracompact} whenever it admits a partition of unity subordinate to any open cover.

\begin{thm}\label{paracompact}
If $X$ is an atomic KB-space, then $(X,\taun)$ is a paracompact topological space.
\end{thm}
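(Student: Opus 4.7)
My plan is to show that $(X,\taun)$ is $\sigma$-compact, from which paracompactness follows by general topology: $(X,\taun)$ is completely regular (as noted in the introduction to the paper), and every $\sigma$-compact completely regular space is Lindel\"of, hence paracompact. Writing $B:=\{x\in X:\|x\|\le 1\}$ for the closed unit ball, and recalling that scalar multiplication is a $\taun$-homeomorphism, the identity $X=\bigcup_{n=1}^\infty nB$ reduces the task to showing that $B$ itself is $\taun$-compact.

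To prove $\taun$-compactness of $B$, I would fix a maximal disjoint family $\{e_\lambda:\lambda\in\Lambda\}$ of atoms in $X$, normalized so that $\|e_\lambda\|=1$, and begin by establishing that un-convergence on $X$ coincides with coordinate-wise convergence in the atomic expansion: $y_\beta\goesun y$ if and only if $(y_\beta)_\lambda\to y_\lambda$ for every $\lambda\in\Lambda$. The forward direction is immediate upon testing against $x=e_\lambda$. For the converse, pick $x\in X^+$ and $\varepsilon>0$; order continuity of the norm (automatic in a KB-space) yields a finite set $F\subseteq\Lambda$ with $\|x-P_Fx\|<\varepsilon$, and then
\[
\||y_\beta-y|\wedge x\|\le\||y_\beta-y|\wedge P_Fx\|+\|x-P_Fx\|,
\]
where the first summand vanishes in the limit by finite-dimensional convergence.

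With this characterization, $(X,\taun)$ embeds homeomorphically as a subspace of $\mathbb{R}^\Lambda$ with the product topology, and $B$ is contained in the compact Tychonoff cube $K:=[-1,1]^\Lambda$ because $|y_\lambda|=\|y_\lambda e_\lambda\|\le\|y\|\le 1$ for $y\in B$. The crux is then to show that $B$ is closed in $K$: if $y\in K$ arises as a coordinate-wise limit of a net $(y_\beta)\subseteq B$, then for every finite $F\subseteq\Lambda$ the partial sum $P_Fy=\sum_{\lambda\in F}y_\lambda e_\lambda$ lies in $X$ with $\|P_Fy\|\le 1$ by finite-dimensional continuity of the norm. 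The increasing family $\{|P_Fy|\}_F$ is then norm-bounded in $X^+$, and invoking the KB-property together with the $\sigma$-Fatou property produces a supremum in $X$ of norm at most one whose $\lambda$-coordinate equals $|y_\lambda|$ for each $\lambda$; from this $y$ is recovered as an element of $X$ with $\|y\|\le 1$, i.e.\ $y\in B$.

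The step I expect to be the main obstacle is precisely this last closedness argument, where the KB-hypothesis is genuinely essential. Dedekind completeness supplies the supremum of the bounded increasing family of partial sums, while the $\sigma$-Fatou property transfers the norm bound from the finite pieces to that supremum. Without the KB-assumption, a coordinate-wise limit of norm-bounded vectors could have uncountable support or simply fail to represent an element of $X$, and the compactness argument would collapse.
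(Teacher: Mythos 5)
Your proof is correct and follows the same skeleton as the paper's: establish that $(X,\taun)$ is $\sigma$-compact via un-compactness of the closed unit ball, then combine regularity (the un-topology is a Hausdorff locally solid linear topology, hence completely regular) with the Lindel\"of property and Morita's theorem to get paracompactness. The one genuine difference is that the paper simply cites \cite[Theorem 7.5]{KMT} for the un-compactness of $B_X$ in an atomic KB-space, whereas you prove it from scratch by identifying un-convergence with coordinatewise convergence in the atomic expansion, embedding $(X,\taun)$ into the Tychonoff cube $[-1,1]^\Lambda$, and checking that the image of $B_X$ is closed there; this is essentially the standard proof of the cited result, so your argument is self-contained where the paper's is not, at the cost of length. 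One small imprecision in your closedness step: Dedekind completeness alone does not supply a supremum for the increasing family $\{|P_Fy|\}_F$, since that family is only norm-bounded, not a priori order-bounded. What you actually need is that in a KB-space every increasing norm-bounded net is norm convergent (the sequential definition extends to nets by extracting a non-Cauchy increasing sequence from a non-Cauchy increasing net); the norm limit of $(P_Fy)_F$ is then the element of $B_X$ representing $y$, and the $\sigma$-Fatou property is not really needed. Since you correctly flag the KB hypothesis as the essential ingredient at exactly this point, this is a matter of bookkeeping rather than a gap.
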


\begin{proof}
Since $X$ is an atomic KB-space, the closed unit ball $B_X$ is un-compact by \cite[Theorem 7.5]{KMT}, so that $(X,\taun)$ is a $\sigma$-compact topological space. By \cite{DOT}, the unbounded norm topology on $X$ is a locally solid Hausdorff topology, so that $(X,\taun)$ is regular (see e.g. \cite{SCH71}). Since $(X,\taun)$ is $\sigma$-compact, it is clearly a Lindel\"of space, so that paracompactness of the unbounded norm topology finally follows from Morita's theorem \cite[Theorem VIII.6.5]{Dugundji:66}.
\end{proof}

The following corollary is an easy consequence of \cite[Theorem VIII.2.2]{Dugundji:66} and \Cref{paracompact}.

\begin{cor}
If $X$ is an atomic KB-space, then $(X,\taun)$ is a normal topological space.
\end{cor}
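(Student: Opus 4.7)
The plan is essentially a one-line deduction: combine the preceding theorem with the classical fact that paracompact Hausdorff spaces are normal. First I would invoke \Cref{paracompact} to conclude that $(X,\taun)$ is paracompact whenever $X$ is an atomic KB-space. Next I would recall that by the general theory of locally solid topologies on vector lattices, the un-topology $\taun$ is Hausdorff (as mentioned in the Preliminaries), so $(X,\taun)$ is in particular a Hausdorff paracompact space.

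Then I would cite \cite[Theorem VIII.2.2]{Dugundji:66}, which states that every paracompact Hausdorff space is normal, to conclude that $(X,\taun)$ is a normal topological space. No further work is needed: the preceding theorem has already absorbed all of the lattice-theoretic content (un-compactness of the unit ball via \cite[Theorem 7.5]{KMT}, $\sigma$-compactness, regularity of locally solid Hausdorff topologies, and Morita's theorem), so this corollary is a purely topological consequence. I do not anticipate any obstacle; the only thing to double-check is that the hypotheses of Dugundji's theorem match ours, namely that $(X,\taun)$ is both paracompact and Hausdorff, both of which have already been established.
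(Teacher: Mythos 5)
Your proposal is correct and is exactly the paper's argument: the corollary is stated there as an easy consequence of \Cref{paracompact} together with \cite[Theorem VIII.2.2]{Dugundji:66} (paracompact Hausdorff spaces are normal), with the Hausdorff property of $\taun$ already noted in the preliminaries. Nothing further is needed.
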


The proof of \Cref{paracompact} heavily depends on Morita's theorem from general topology. It would be of interest to find a functional analytical proof. We conclude the paper with the following open question.

\begin{prob}
Under which conditions on a Banach lattice $X$ the topological space $(X,\taun)$ is normal?
\end{prob}

The answer remains  unknown even for order continuous atomic Banach lattices.

\bibliographystyle{alpha}
\bibliography{separability}

\end{document}